\newtheorem{theorem}{Theorem}
\newtheorem{lemma}[theorem]{Lemma}
\newtheorem{prop}[theorem]{Proposition}
\newtheorem{remark}[theorem]{Remark}
\numberwithin{theorem}{section}
\numberwithin{equation}{section}
\newcommand{\beq}{\begin{equation}}
\newcommand{\eeq}{\end{equation}}
\newcommand{\beqo}{\begin{equation*}}
\newcommand{\eeqo}{\end{equation*}}
\newcommand{\bdm}{\begin{displaymath}}
\newcommand{\edm}{\end{displaymath}}
\newcommand{\beqar}{\begin{eqnarray}}
\newcommand{\eeqar}{\end{eqnarray}}
\newcommand{\beqaro}{\begin{eqnarray*}}
\newcommand{\eeqaro}{\end{eqnarray*}}
\newcommand{\bal}{\begin{align}}
\newcommand{\eal}{\end{align}}
\newcommand{\X}{\mathbf{X}}
\newcommand{\mmu}{\mu^{-1}}
\newcommand{\Jset}{\overline{J}}
\newcommand{\nvect}{\mathbf{n}}
\DeclareMathOperator*{\argmin}{\arg\!\min}
\newenvironment{ignorechecktex}{}{}
\newcommand\alignphantomspace{\mathrel{\hphantom{=}}}
\newcommand{\expdist}{Exp} 
\begin{document}
\begin{ignorechecktex}
\title{Heuristic Approximations for Closed Networks:\\ A Case Study in Open-pit Mining
}

\date{25 March 2016}
\author{Hans Daduna%
\thanks{\emph{Hamburg University},
    Department of Mathematics,
    Bundesstra\ss e 55,
    20146 Hamburg,
    Germany}
\and
Ruslan Krenzler%
\footnotemark[1] 
\and
Robert Ritter%
\thanks{\emph{Sandvik Mining and Construction Crushing Technology GmbH},
	Bautzner Stra\ss e 58,
	01099 Dresden,
    Germany}
\and Dietrich Stoyan%
\thanks{\emph{TU Bergakademie Freiberg}, Institute of Stochastics,
    Pr{\"u}ferstra\ss e 6,
    09596 Freiberg,
    Germany}
}

\maketitle
\end{ignorechecktex}
\begin{abstract}
We investigate a fundamental model from open-pit mining, which is a cyclic system consisting of a shovel,
traveling loaded, unloading facility, and traveling back empty. The interaction of these subsystem determines the capacity of the shovel, which is the fundamental quantity of interest. To determine this capacity one needs the stationary probability that the shovel is idle. Because an exact analysis of the performance of the system is out of reach, besides of simulations there are various approximation algorithms proposed in the literature which stem from
computer science and can be characterized as general purpose algorithms.
We propose for solving the special problem under mining conditions an extremely simple algorithm.
Comparison with several general purpose algorithms shows that for realistic situations the special algorithm outperforms the precision of the general purpose algorithms.
This holds even if these general purpose candidates incorporate more details of the underlying models
than our simple algorithm, which works on a strongly reduced model.
The comparison and assessment is done with extensive simulations on a level of detail which the general purpose algorithms are able to cover.
 
\bigskip
\noindent
\textbf{AMS (1991) subject classification:}  60K25, 60J25

\noindent
\textbf{Keywords:} mining, queues, closed networks, transport, stochastic model, algorithms, heuristic methods, long-run idle times.
 
\end{abstract}

\tableofcontents

\section{Introduction}\label{sect:intro}
Closed networks of queues served in many areas as models to investigate performance and reliability of
systems.
For so-called product form networks there exist well developed tool sets for such investigations,
see the classical papers on  Jackson~\cite{jackson:57} and Gordon and Newell~\cite{gordon;newell:67} networks
and their generalizations as BCMP from Baskett, Chandy, Muntz, and  Palacios~\cite{baskett;chandy;muntz;palacios:75} and Kelly~\cite{kelly:76} networks, for a short review see Daduna~\cite{daduna:01a}. The resulting product form calculus provides closed form solutions for the most important performance metrics.

If the problem setting enforces to deviate from the necessary properties needed to hold for  using
product form calculus (e.g.\ exponential distributions, independence), often no closed analytical results for performance and reliability analysis exist, and therefore various approximation methods are
developed. A survey is the monograph by Bolch, Greiner, de Meer, and  Trivedi~\cite{bolch;greiner;demeer;trivedi:06}

Easier access to the field is via the textbook of
Gautam~\cite{gautam:12}. 

The algorithms described in these books are mostly developed by researchers from the field of computer and communications networks, but are claimed to be general purpose algorithms, e.g.\ to compute throughput of
any suitably defined network. Indeed, this has been proven in many applications, e.g.\ in production and logistics networks.

The topic of our paper is located in a rather different area:
A particular model from open-pit mining had to be analyzed. To be more precise: We are interested in the annual capacity of a (large) shovel in open-pit mining and, in a second step, in the number of trucks needed to optimally run the system.
From the very beginning, experience of
engineers in this field excluded product form models from being realistic. This suggests to apply one or more of the mentioned general purpose approximation algorithms at hand.

A comparison with results obtained by detailed simulation revealed that these algorithms often
do not perform well in this special application. Because of the high values  to invest
 the question  arises whether
it is possible to develop a specialized algorithm which can provide reliable performance predictions  before investment decisions are made. Indeed, for this particular case a heuristic
approximation from Dietrich Stoyan \& Helga Stoyan (1971) is at hand. This algorithm was developed for the special problem in pit-mining and related systems' analysis and is simple.
We revisit this algorithm here because it seems to be not accessible to the international community. It turned out that with today's computing systems there are no runtime problems, which holds for the mentioned general purpose
algorithms as well. Therefore we are only interested in precision, which is here defined by the distance from simulation outcomes
of performance metrics of interest obtained by either the Stoyan \& Stoyan (1971) algorithm or by the
general purpose algorithms.
It was a little bit surprising that despite of its simplicity the algorithm  outperforms in a realistic parameter setting which is characterized by relatively moderate variability in the processing process
all general purpose algorithms we tackled. 

To be honest we show that with high variability in the system direct application of this new algorithm is not recommendable. We will discuss this in detail and find out that our observation is in line with recommendations for to apply queueing models for performance evaluation in open-pit mining systems.

In a second step we therefore modify the simple algorithm to overcome this drawback. It turns out that the modified version performs well even in situations with high variability in the system.

The message of the paper therefore is: Although there exists in the computer science literature a variety of general purpose algorithms for performance evaluation of complex networks, it is often advisable
to look for special purpose algorithms which are adapted to the special problem dealt with.
This recommendation surely applies when the machines to buy or to construct are of very high value.

\paragraph{Some related work}
An introduction into the field of shovel-truck type operations is given by Carmichael~\cite{carmichael:86}  with an emphasis on ``How to apply queueing models''. He discusses the whole range of problems arising with queueing network models in this application area and gives recommendations how to
proceed  in such studies. Especially, he discusses data sets from the literature. The cyclic queues which are in the focus of our paper are the starting point of his description under the heading ``Reconciliation of theory and practice''. 

A more detailed description of closed queueing network models applicable in shovel-truck systems, especially of generalized Gordon-Newell networks and their algorithmic evaluation is given by Kappas and Yegulalp~\cite{kappas;yegulalp:91}. 

Ta, Ingolfsson, and Doucette~\cite{ta;ingolfsson;doucette:13} develop a linear integer program to optimize the number of trucks in a multi-shovel system with prescribed number of shovels. To determine the idle probabilities of the shovels they use simple, approximate finite source queueing models. 

Zhang and Wang~\cite{zhang;wang:09} consider a cyclic shovel-truck system of four stations:
Loading, traveling loaded, unloading, traveling  back empty,  where the unloading station is given special attention. By simulation they confirm that complexity of this station can be reduced to a single queue,
which allows to apply a general purpose algorithm from the literature to determine the system's capacity. 

For general principles of  modeling, analysis, and calculations in shovel-truck systems we refer to
the books Carmichael~\cite{carmichael:87} and Czaplicki~\cite{czaplicki:08}. For detailed information  on the closed two-station tandem queues which will be in the center of our paper we refer to Stoyan~\cite{stoyan:78} and Daduna~\cite{daduna:86a} 

For the general performance analysis algorithms we refer to standard literature, for example \cite{bolch;greiner;demeer;trivedi:06}  and~\cite{gautam:12}.

\paragraph{Structure of the paper}
In \prettyref{sect:DescriptionProblem} we describe the application area and point out the connection
of the transportation problem with loading and unloading to cyclic queueing networks.
Some more details on cyclic queueing networks are provided in  \prettyref{sect:CyclicQueue}.
In \prettyref{sect:SpecialStructure} we describe in detail the approach of Stoyan \& Stoyan to evaluate the main performance metric ``annual capacity of the shovel'' (which can be reduced to determine the stationary   idling probability of the shovel) and lay the ground for the algorithm to be invented next.
In \prettyref{sect:GeneralAlgEvaluation} we discuss comparison of several algorithms from the literature with the new algorithm which is given in \prettyref{sect:AlgStSt}.
The comparison is provided in \prettyref{sect:comparison} and it turns out that the algorithm of
Stoyan \& Stoyan performs well in the original realistic parameter setting.
In \prettyref{sect:DisturbLoading} we modify the parameter setting in a way that high variability
due breakdown interruptions of the shovel occurs in the system.
For completeness of the presentation we attach an appendix and describe in  \prettyref{sect:SurveyGenPurpose} the relevant class of Gordon-Newell networks and the general purpose
algorithms. We furthermore collect in this section omitted proofs and add related information which is helpful to understand the algorithms. 

Throughout the paper we discuss in detail the underlying modeling assumptions.

\paragraph{Remarks about pictures}
To the extent possible under law,
Ruslan Krenzler
has waived all copyright and related or neighboring rights to Figures
\ref{fig:queueing-model},
\ref{fig:mining-problem}, and \ref{fig:2-nodes-queueing-model}.\\
See \url{http://creativecommons.org/publicdomain/zero/1.0/}

\section{ Description of the problem  and the  model}
\label{sect:DescriptionProblem}

\paragraph{Structure of the system}
We consider a closed cyclic queueing network of four nodes,  numbered $\Jset := \{1,2,3,4\}$. See \Vref{fig:queueing-model}.
 The nodes are visited in this order sequentially by all  $K\geq 1$ customers in the network.

 Nodes $1$ and $3$ are single servers with unlimited waiting rooms, nodes $2$ and $4$ are infinite servers.
Service times at node $j$ have distribution function  $F_j$ with finite mean
$\mmu_j$ and finite variance $\sigma_j^2$. All service times are independent. We denote by $X_j$ a typical random variable distributed according $F_j, j=1,\dots,4$.
{Summarizing}: A customer starting at node $1$ visits sequences of nodes
\[\dots \longrightarrow \cdot/G/1/\infty\longrightarrow \cdot/G/\infty\longrightarrow
\cdot/G/1/\infty\longrightarrow \cdot/G/\infty\longrightarrow\dots .\]

\begin{figure}
\begin{center}
\includegraphics{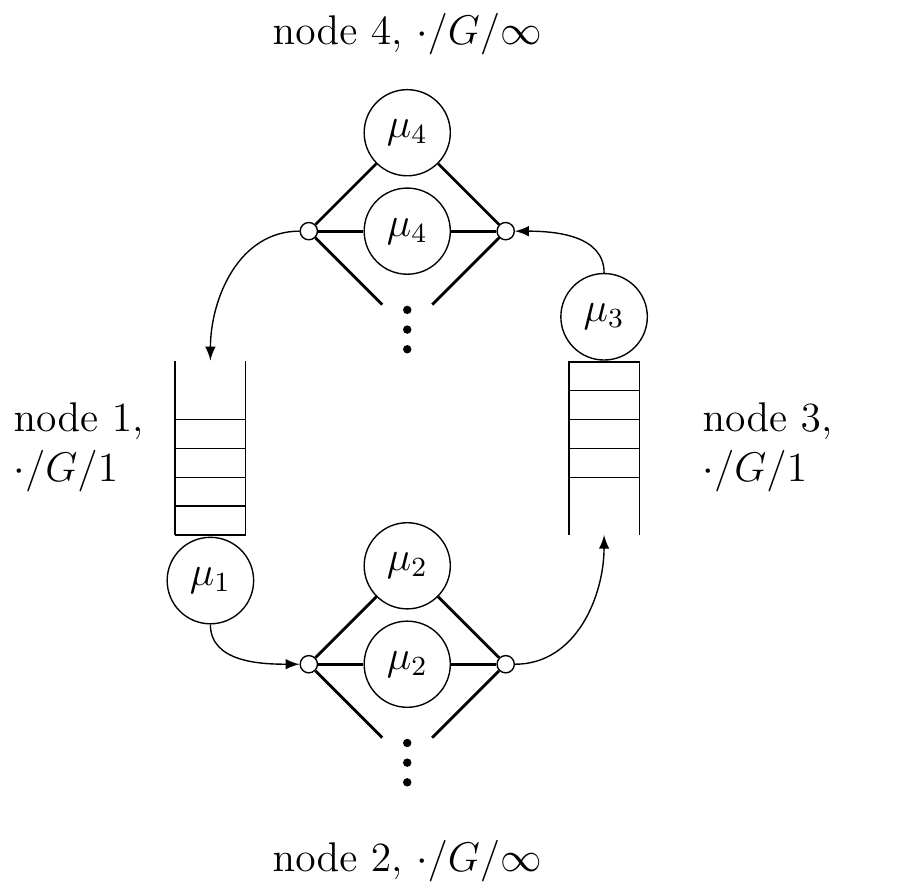}
\end{center}
\caption{Queueing model.\label{fig:queueing-model}}
\end{figure}

\subsection{Application area and details of the mining system}
\label{sect:ApplicationArea}
Our interest in this closed network comes from an important application in surface mining where
a system of four sequentially ordered components occur. Node $1$ stands for a shovel, which loads
trucks, the $K$ customers (jobs) stand for trucks (which are considered as identical), service at
node $2$ stands for  traveling  of loaded trucks, node $3$ for
an unloading system at a crusher and service at node $4$ for traveling back  of empty
trucks.  See \Vref{fig:mining-problem}.
Stoyan and Stoyan~\cite{stoyan;stoyan:71}  considered an analogous system,
where instead of trucks trains were used for material transport and instead of
a shovel a bucket-chain or bucket-wheel excavator was employed corresponding to the condition in the East German lignite mining. (This only
leads to different service time distributions.)

In the concrete example below we consider trucks with rated payloads of about \SI{220}{\tonne} and shovels with bucket volumes of about \SI{40}{\cubic\meter}. The price of a corresponding
truck is about \SI{6.5}{M\$}, that of a shovel about \SI{13}{M\$}.

\begin{figure}[ht]
\begin{center}
\includegraphics{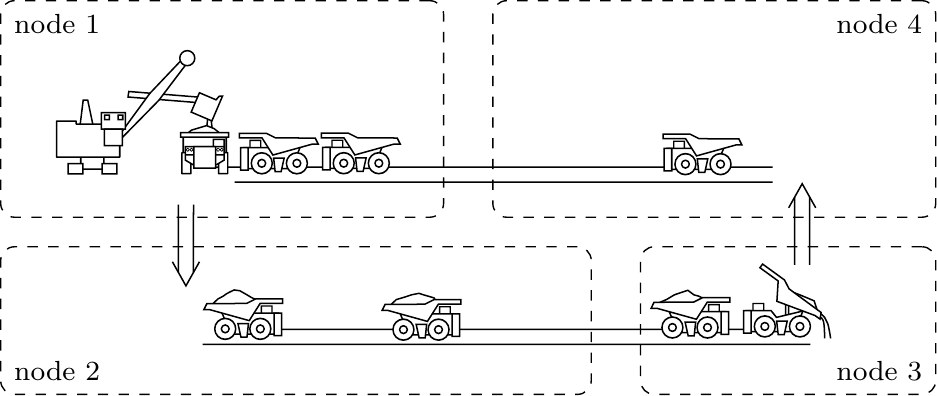}
\end{center}
\caption{Mining problem.\label{fig:mining-problem}}
\end{figure}

The assumption that node $3$ is of type $\cdot/G/1/\infty$ is a
simplification, because in a realistic model the unload system and crusher  encompasses a tandem system of subsequent  processing facilities  and  buffers.
This simplification  is acceptable in light of the paper of
Zhang and Wang~\cite{zhang;wang:09}. In the mining system investigated there, at the crusher node
there are discharge platforms, intermediate buffers, and a hopper. The crusher eventually feeds
a chain of belt conveyors. Zhang and Wang~\cite{zhang;wang:09}
showed by simulation that under ``normal conditions'' (the number of trucks $K$ not too large)
waiting times at this node can be neglected.
This will be utilized in the construction of the approximation algorithm.

We note that the operations in the mining system are even more complex.
The shovel moves from time to time, which changes the travel times of trucks.
Furthermore, the work of the shovel is often interrupted by repair times (planned or after
disturbances) and operating delays such as maneuvering, wall scaling or pad clean-ups.
In the following, disturbances of the shovel are integrated into truck loading times,
while disturbances of trucks and of the unloading process at node 3 are ignored,
similar to the procedure by Zhang and Wang~\cite{zhang;wang:09}, where  even disturbances
of the shovel are neglected, and in other papers on similar
mining transport systems,  e.g.,\ Peng, Zhang, and Xi~\cite{peng;zhang;xi:88},
Muduli~\cite{muduli:97},~\cite{ta;ingolfsson;doucette:13}.
This may be justified by employment of reserve trucks and of a large hopper. 

In many mining systems more than one shovel works. This  leads to
the need of employment of a dispatcher, who has to control the assignment
of empty trucks to the shovels. This will not be considered in detail here.

\paragraph{Target of our investigation}
The most important characteristic of the system is the annual capacity of the shovel.
To calculate this we need $\pi_1(0)$, the stationary probability that node $1$ is idle.
We will show that this can be obtained by  determining the throughput  of the shovel, measured in
the number of trucks loaded per time unit.
Throughout, in this paper time unit is 1 minute.

\subsection{Specifying the service time distributions}
\label{sect:ServiceTimeDistribution}

The service time distributions at the  nodes are determined by the technical conditions in surface mining
systems. Various authors have studied these  distributions  statistically, and in this paper we use the simplest and most popular distributions.

The service time at node $1$ is the load time for one truck including a possible repair time
in case of a disturbance (e.g.\ breakdown and repair)  during loading. The load time follows
in good approximation a normal distribution $F_1 = {\cal N}(\mmu_1,\sigma_1^2)$
as shown, e.g., by  Chanda and Hardy~\cite{chanda;hardy:11},
 Czaplicki~\cite{czaplicki:08},
Knights and Paton~\cite{knights;paton:10}, as a sum of (a random number) of random bucket cycle times.

The service times at nodes $2$ and $4$ are the travel times of trucks.
We assume that these times are normally distributed with means
$\mmu_2$ and $\mmu_4$ and variances $\sigma_2^2$ and $\sigma_4^2$.
The values for node $2$ are larger than for node $4$. 

We note that in the literature also other distributions have been discussed,
in particular the inverse Gauss distribution, see Stoyan and Stoyan~\cite{stoyan;stoyan:71}
 and  Panagiotou~\cite{panagiotou:93}, and Erlang distributions~\cite{ta;ingolfsson;doucette:13, carmichael:86}. It should be mentioned that Carmichael reports ``Best fit Erlang distributions for available published field data''~\cite[Table 2]{carmichael:86}, the number of exponential phases is usually
of an order that the shape of the  Erlang densities strongly resemble those of normal densities.

To obtain bounds it is recommended to use either deterministic service times (minimal coefficient of
variation) or exponentially distributed service times (coefficient of
variation $= 1$) for high variability in realistic scenarios,~\cite{ta;ingolfsson;doucette:13, carmichael:86}.

Also the service time at node $3$ (the crusher station) is assumed to be
normally distributed, with mean $\mmu_3$ and variance $\sigma_3^2$.

{\bfseries Discussion.}
As indicated above usage of normal distributions follows experiences from the
specific application in surface mining.
In our present investigation this usage is supported by the fact that the
numerical algorithms which we will apply in our performance analysis rely on (at most) two-parameter approximations. Prescribing expectation and variance of a distribution on $\mathbb R$
and taking the distribution with maximum entropy leads to usage of normal distributions.
It will turn out that the realistic parameter spaces in our applications with small disturbances, listed in \Vref{tab:ServiceTimes}, yield
probabilities for negative values of order $10^{-5}$. However adding large disturbances, described in \Cref{sect:DisturbLoading},  changes the negative probability significantly: It becomes $\approx0.292$ at node 1 and therefore the assumption of a normal distribution becomes questionable.
 
\section{The cyclic queueing model}
\label{sect:CyclicQueue}
The cyclic shovel-crusher-transport system under consideration  admits a Markovian modeling by counting the number of customers at each node and additionally
recording at nodes $1$ and $3$ the residual service time (loading, resp.\ unloading time for the truck in service, if any)
and recording at nodes $2$ and $4$ the residual service times (residual travel times) of all customers
in the infinite server queues.
The resulting Markov process has a unique stationary and limiting distribution.
This is easy to show if we have service time distribution which are finite mixtures of Erlang distributions.
Such distributions are dense (with respect to weak convergence) in the set of all distributions on
$[0,\infty)$ with finite expected values, Schassberger~\cite[p. 32, Satz 1]{schassberger:73}. The limiting arguments (continuity of queues) which establish the general statement can be found in Barbour~\cite{barbour:76}.

We assume throughout that this Markov process is in its stationary state.
Our interest is in  performance metrics of the stationary system, especially in $\pi_1(0)$,
the stationary probability  that node $1$  is empty.
It is easily seen that for this queueing network in general there is
no closed form analytical solution for obtaining stationary or asymptotic characteristics.

The standard approach to get information about the system's behavior
therefore usually has been simulation, in our problem setting with focus on the utilization of node $1$, measured in its idle times. Because we will discuss the power of analytical and direct numerical
procedures the following statement will be useful because many of the standard algorithms from
performance analysis are focused on computing mean values of queue lengths, waiting times, and especially
on throughput. No distributional assumptions are needed in the proposition.

\begin{prop}\label{prop:pi0}
Denote in the network with $K$ customers by $\lambda(K)$ the stationary throughput, measured in truck loads per time unit of the cyclic network with $K$ customers, i.e.\ the expected total  number
of departures in the cycle per time unit and $\lambda_j(K)$ the throughput of node $j$, i.e.\ the expected number of departures from $j$ per time unit.
\begin{ignorechecktex}
\begin{enumerate}[label=\emph{(\alph*)}]
\item\label{enu:prop-pi-0-local-lambda-is}
Then $\lambda_j(K) = \frac{1}{4}\lambda(K)$ holds.
\item\label{enu:prop-pi-0-pi-0-is}
The steady state probability for node $1$ to be empty is
\begin{equation}
	\pi_1^{(K)}(0) = 1 - \lambda_1(K)\cdot {\mmu_1}.\label{eq:pi0}
\end{equation}
\end{enumerate}
\end{ignorechecktex}
\end{prop}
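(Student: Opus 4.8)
The plan is to derive both statements from elementary rate-conservation (flow-balance) arguments, working throughout with long-run time averages and identifying them with the stationary quantities via the unique limiting distribution guaranteed in the model setup. For part \ref{enu:prop-pi-0-local-lambda-is} the decisive structural fact is that the routing is deterministic and cyclic: every departure from node $j$ is instantaneously an arrival at the next node (indices taken cyclically, so that node $4$ is followed by node $1$). First I would observe that in the stationary regime the queue-length process at each node is itself stationary and bounded by $K$, so its expected rate of change vanishes; hence the long-run arrival rate $\alpha_j(K)$ equals the long-run departure rate $\lambda_j(K)$ at every node. The cyclic routing gives $\alpha_{j+1}(K)=\lambda_j(K)$, and combining this with $\alpha_{j+1}(K)=\lambda_{j+1}(K)$ yields $\lambda_j(K)=\lambda_{j+1}(K)$. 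Chaining these equalities around the loop gives $\lambda_1(K)=\lambda_2(K)=\lambda_3(K)=\lambda_4(K)$. Since $\lambda(K)$ is by definition the total departure rate summed over all four nodes, $\lambda(K)=\sum_{j\in\Jset}\lambda_j(K)=4\,\lambda_j(K)$, which is the claim.

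For part \ref{enu:prop-pi-0-pi-0-is} I would invoke the utilization law for the single server at node $1$. First note that, because node $1$ is a work-conserving $\cdot/G/1/\infty$ station, the event that node $1$ is empty coincides with the event that its server is idle (a waiting truck would enter service immediately), so the long-run fraction of time the server is busy equals $1-\pi_1^{(K)}(0)$. Next I would evaluate this busy fraction a second way. Writing $B(t)$ for the cumulative busy time and $D_1(t)$ for the number of loadings completed in $[0,t]$, the accumulated busy time is the sum of the service requirements of the served trucks up to a boundary residual bounded by a single service time; hence $B(t)/D_1(t)\to\mmu_1$ by the strong law of large numbers for the i.i.d.\ loading times, while $D_1(t)/t\to\lambda_1(K)$ by definition of throughput. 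Multiplying, $B(t)/t\to\lambda_1(K)\,\mmu_1$. Equating the two limits for the busy fraction gives $1-\pi_1^{(K)}(0)=\lambda_1(K)\,\mmu_1$, which is exactly \eqref{eq:pi0}.

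The step I expect to require the most care is the legitimacy of replacing the time-average limits by the stationary probabilities and rates, and of asserting that arrival and departure rates coincide at each node. This rests entirely on the ergodicity already established for the supplementary-variable process, which has a unique stationary and limiting distribution; the pointwise ergodic theorem then applies to the busy-time indicator and to the arrival- and departure-counting processes, making all the limits above deterministic and equal to the corresponding stationary expectations. With that in hand the remaining manipulations are routine, and—consistent with the remark preceding the statement—no distributional assumptions beyond finiteness of $\mmu_1$ enter the argument.
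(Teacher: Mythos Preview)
Your argument is correct. For part~(a) you spell out in detail what the paper disposes of in a single sentence (``follows from the cyclic structure and the fact that in equilibrium all four nodes must have the same throughput''), so there is no real difference there.

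For part~(b) you take a genuinely different route from the paper. The paper applies Little's formula $L=\lambda W$ twice to node~$1$---once to the whole node (mean queue length $=$ arrival rate $\times$ mean sojourn time) and once to the waiting room only (mean number waiting $=$ arrival rate $\times$ mean waiting time)---and subtracts to obtain that the mean number in service equals $\lambda_1(K)\,\mmu_1$; since the server holds at most one customer, this mean is $1-\pi_1^{(K)}(0)$. Your argument instead derives the utilization identity directly from a pathwise busy-time decomposition and the strong law for the i.i.d.\ service times, bypassing Little's law altogether. Both approaches rest on the same ergodicity input (the paper explicitly invokes Stidham's pathwise proof of Little's formula, so the sample-path flavour is common to both), and both are distribution-free in the sense the proposition advertises. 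Your route is slightly more self-contained for this particular identity, while the paper's has the advantage of reducing everything to a named theorem with a citation.
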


\subsection{The cycle with product form steady state}\label{sect:PF}
\paragraph{Exponential service time at nodes 1 and 3}
If we  take service times at nodes  $1$ and $3$ as exponentially distributed, i.e.\ we
set the service times there only on the basis of a one-parameter approximation $F_j = \exp(\mu_j)$, $j=1, 3$ (which stems from
entropy arguments as above for normal distributions), we enter the field of product form networks and are able to write down explicitly the steady state distribution of the system in explicit and simple form, see e.g.~\cite{jackson:57, gordon;newell:67, baskett;chandy;muntz;palacios:75,kelly:76}.
To make the paper self-contained we sketch Gordon-Newell networks in \prettyref{sect:GN}.
It should be noted that even with these simple formulas problems occur with numerical evaluation of the
relevant probabilities.
There exist a variety of algorithms, developed in the area of performance analysis
of computer and communications networks  which output main characteristics of the networks
or parts of this, e.g., throughputs, mean queue lengths, mean travel times,
see~\cite{bolch;greiner;demeer;trivedi:06}.

\subsection{The cycle without product form steady state}
\label{sect:NonPF}
\paragraph{Non-exponential service time at nodes 1 and 3 under FCFS\protect\footnote{first-come, first-served}}
In this case no explicit steady state distribution has been found up to now.
Moreover, the network under consideration therefore does not meet the requirements needed to
apply the algorithms  based on  product form equilibrium.
In the area of computer systems and telecommunications networks  approximative algorithms are
developed over the last thirty years which transform the performance evaluation algorithms for product form networks into similarly structured approximation algorithms for non-product-form systems,
see~\cite{bolch;greiner;demeer;trivedi:06}.

On the other hand, it is often observed that these networks show  robustness against deviations from underlying distributional assumptions. This would justify to apply even the algorithms
based on product form assumptions to the non-product-form shovel-transportation-crusher network.

\section{Special structure of the mining system}
\label{sect:SpecialStructure}
The  structure of the system is  linear  within one cycle of a truck.
Such networks are prominent examples of models for teletraffic and transmission systems. But the observed
normal distributions for loading and unloading times  in the mining cycle are uncommon in,
 say, teletraffic models.
This raises the question whether there exist  ``special purpose algorithms'' to solve the specific
problem under consideration: To determine the throughput of the shovel in the mining cycle.
Astonishingly enough, the answer is to the positive.
Stoyan \& Stoyan~\cite[Section 8.3]{stoyan;stoyan:71}
proposed a short and efficient algorithm to determine the stationary idling property of the shovel
and then to apply \prettyref{prop:pi0}.

Their proposal utilized the fact that for realistic parameter values
$(\mmu_j,\sigma_j^2)$, $j=1,2,3, 4$, the waiting time at node $3$ (unloading at the crusher) can be neglected, see the discussion in  \prettyref{sect:ApplicationArea} and in the references mentioned there.
A consequence is that node $3$ {\bfseries acts as if it would be an infinite server} node $\cdot/G/\infty$.
This implies that we can replace the node sequence $2\to 3\to 4$ by a {\bfseries single node} of type
$\cdot/G/\infty$ with service time distribution $F_2*F_3*F_4$ (where $*$ denotes convolution)
exploiting  the overall independence assumptions. $F_2*F_3*F_4$ is therefore the distribution function of the so-called backcycle times,~\cite[p. 166]{carmichael:86}.

This reduces the model to a two-stage cycle\label{page:4to2-stage}
\[\dots\to\cdot/G/1/\infty\to \cdot/G/\infty\to\dots\]
See \Vref{fig:2-nodes-queueing-model}.
\begin{figure}[h]
\begin{center}
\includegraphics{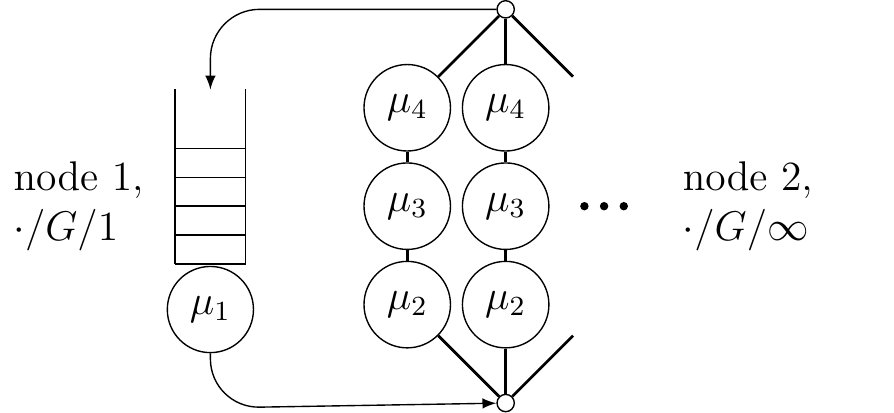}
\end{center}
\caption{Mining problem.\label{fig:2-nodes-queueing-model}}
\end{figure}

These closed two-stage tandem networks with one station being an infinite server are known as finite source queues (or ``repairman model''),
see Gross and Harris~\cite[Section~3.6]{gross;harris:74}.
$K$ is in that context the number of sources which send out customers to the other server (repairman).The authors
in~\cite{ta;ingolfsson;doucette:13} refer to these class of models when investigating idle times of
the shovel  in an oil sand mining operation system.
They fitted Erlang distributions to the empirical distribution functions of service times at the shovel and of the back-cycle time of the trucks.

For more detailed results concerning closed queueing network, especially cyclic queues, in truck-shovel systems for mining operations, see~\cite{carmichael:86}
and Kappas and Yegulalp \cite{kappas;yegulalp:91}.
Carmichel discusses the pros and cons for using the 4-stage cycle versus the 2-stage
cycle~\hbox{\cite[p. 162]{carmichael:86}}.


\subsection{The approximation of Stoyan \& Stoyan~\texorpdfstring{\cite{stoyan;stoyan:71}}{}}\label{sect:StoyanApproximation}
We consider the substitute two-stage cyclic network consisting of  a $\cdot/G/1$ node (for
the shovel) and a $\cdot/G/\infty$ node (for  the backcycle = transport + unloading + return time of the truck).
For concise notation we denote a typical  service time at node 1  by $S_{\bullet}$ or $S$, and
a typical idle time at node 1  by $I_{\bullet}$ or $I$.
The number of jobs (trucks) is $K$. We assume that the system is in its stationary state.

\paragraph{The main idea}
At node $1$ one observes an alternating sequence of service times and
idle times.  The latter can be zero with positive probability. This is observed when a job was waiting for service at the end of previous service.
Clearly, it holds
\begin{equation}\label{eq:pi10-1}
\pi_1(0) = \frac{E I}{E I + E S}.
\end{equation}
Thus we
have to determine the mean idle time $E I$.
For this, we tag a customer with label $1$. 
When his service at node $1$ expires we start a clock.
We stop the clock when $K-1$ further customers  at node $1$ are served. 
Then the time indicated by the clock hand is composed of a sequence of $K-1$ blocks, each block consisting of an idle time $I_k$ and a service time  $S_k$, $k=2,3,\dots,K$.

Let $T$ denote the backcycle time, i.e.\ the time from the departure of customer $1$ from node $1$ until 
he returns to node $1$ and enters there either the tail of the queue or starts immediately to be served.
Consider the random variable
\begin{equation}\label{eq:W-1}
 W := T - (I_2 + S_2 + \cdots + I_K + S_K).
\end{equation}
It follows
\begin{equation} \label{eq:W-2}
E W = E T - (K - 1)(E I + E S).
\end{equation}
Roughly, we want to compare the return time $T$ of customer $1$ to node $1$ with the time to empty node $1$ from the other $K-1$ customers.

If the cyclic two-stage network is {\bfseries overtake-free} the customer labeled $1$ is the uniquely
determined first one to enter service at node $1$ after all other $K-1$  customers have
departed there. If this is the case we can conclude
that $W$ refers uniquely to quantities dedicated to the distinguished customer $1$.
We have two cases:
\begin{ignorechecktex}
\begin{enumerate}[label=(\emph{\roman*})]
 \item
 If $W$ is positive,
it is the idle time before service of job $1$ starts in the next cycle, so $I \sim W$,

\item  otherwise, if $W$ is negative, it is
the negative of the waiting time of job $1$ before its next service commences and there is no idling of $1$
in this case,  so $I = 0$.
\end{enumerate}
\end{ignorechecktex}
Because customers are indistinguishable and from stationarity  thus
\begin{equation}
\label{eq:I-1}
 E I = E\max\{0, W\}.
\end{equation}

We assume that $W$ has a normal distribution with mean $\mmu_W$ and
variance $\sigma^2_W$. An approximative normal distribution is plausible since $W$ is a sum of many random variables, where the
majority of them are independent.
The normality assumption implies
\begin{equation} \label{Phi}
E I = E\max\{0, W\} =:i(\mmu_W) = \mmu_W
\Phi\left(\frac{\mmu_W}{\sigma_W}\right) + \sigma_W \varphi\left(\frac{\mmu_W}{\sigma_W}\right), 
\end{equation}
where $\Phi$ denotes the distribution function of standard normal
distribution and $\varphi$ the corresponding density function.

Neglecting the variance of the idle times in~\eqref{eq:W-1}, resp.~\eqref{eq:W-2},
for computing the variance of $W$ we use the approximation
\begin{equation*}
\sigma^2_W = var T + (K - 1)\cdot var S.
\end{equation*}
The mean $\mmu_W$ is chosen so that
\begin{equation*}
\mmu_W = E T - (K - 1)(E I + E S)
\end{equation*}
holds, where $E I$ is given by (\ref{Phi}).
Thus we have for $\mmu_W$ the equation 
\begin{equation*}
\mmu_W = E T - (K - 1)\left(E S +
\mmu_W \Phi(\frac{\mmu_W}{\sigma_W}) + \sigma_W \varphi\left(\frac{\mmu_W}{\sigma_W}\right)\right).
\end{equation*}

Consequently, with $\mmu_W$ at hand and with
$i(\mmu_W)$ given by (\ref{Phi}), we obtain
\begin{equation*}
\pi_1(0) = \frac{i(\mmu_W)}{i(\mmu_W) + E S}.
\end{equation*}

\paragraph{Discussion}
Seemingly, an essential point is that during one cycle of a customer (truck)  the network is topologically
overtake-free. Although the transportation nodes $\cdot/G/\infty$  are not overtake-free, the small variances of the transportation times (which is reasonable in the context), together with the FCFS regime of the loading and unloading station makes the cycle nearly overtake-free.
 Then the central formula~\eqref{eq:W-1}
\begin{equation*}
 W := T - (I_2 + S_2 + \cdots + I_K + S_K),
\end{equation*}
enters  with high precision although it is not completely true.
Overtake-freeness will be  exact by physical reasons, if railway transportation with only one railway line is used.\\
An important case where overtake-freeness holds is the system with deterministic service and travel times.
Despite of its simplicity it is of value as bounding system as Carmichel~\cite{carmichael:86} remarked.
He mentioned that this model was already used  by
Boyse and Warn~\cite{boyse;warn:75} as a simple approach to evaluate computing systems.
We shall discuss this later on and will provide in the appendix more details for this model.
Clearly, such flow approximations are of value in general network systems, and we shall therefore exploit a flow approximation to construct a general purpose algorithm as well.

\section{Algorithmic evaluation}\label{sect:GeneralAlgEvaluation}
The network under consideration in general does not meet the requirements needed to show a product
form equilibrium. The reason is that the shovel and the unloading facility work on a FCFS basis and the loading and unloading times for the transportation units are not exponentially distributed.
On the other hand, it is often observed that these networks are  robust against deviations from underlying distributional assumptions. This justifies to apply the algorithms
tailored for product form networks to the shovel-transportation-crusher network, see the MVA below.\\
Moreover, in the area of computer systems and telecommunications networks approximative algorithms are
developed over the last thirty years by transforming the performance evaluation algorithms for product form networks into similarly structured algorithms for non product form systems.
These are natural candidates for determining the idling probability of node $1$.

In the light of the present investigation we can characterize all these exact and approximative
algorithms as ``general purpose algorithms''. This general purpose methodology
has been applied in various  fields of applications of OR, e.g.\  production and transportation.

Our aim is to compare the precision of several algorithms when computing the idling probability of the shovel in the mining system. Precision is assessed by comparison with extensive simulations. The focus is on the question whether an extremely simple algorithm tailored for the specific mining system can outperform the
well established general purpose algorithms developed for general models in performance analysis for computer and communications.

We describe in \prettyref{sect:AlgStSt}\label{page:ListAlg}
\begin{enumerate}[label=$\langle\arabic*\rangle$, start=0]
\item\label{enu:alg-list-stoyan}
  ST\&ST, the special algorithm based on the approximation developed by
Stoyan \& Stoyan~\cite{stoyan;stoyan:71} from \prettyref{sect:StoyanApproximation}.
\end{enumerate}

Thereafter, the following general purpose algorithms (described in  \prettyref{sect:SurveyGenPurpose})
 will be  compared with this special algorithm in  \prettyref{sect:comparison}:\\
\begin{enumerate}[label=$\langle\arabic*\rangle$, resume]
\item\label{enu:alg-list-MVA}
MVA (Mean Value Analysis) for product form networks in the setting of \prettyref{sect:PF}.\\
Then we remove the assumption of exponential loading and unloading times (described in \prettyref{sect:NonPF}) and obtain:
\item\label{enu:alg-list-GMVA}
GMVA (Generalized Mean Value Analysis), see~\cite[Section 7.1.4.2]{gautam:12};
\item\label{enu:alg-list-ESUM}
ESUM (Extended Summation Method), see~\cite[Section 10.78]{bolch;greiner;demeer;trivedi:06} for non-product form networks;
\item\label{enu:alg-list-EBOTT}
EBOTT (Extended Bottleneck Approximation) for non-product form networks, see~\cite[Section 10.88]{bolch;greiner;demeer;trivedi:06};
\item\label{enu:alg-list-flow}
FLOW (deterministic flow approximation), where we consider the system as deterministic dynamical system, see~\cite{boyse;warn:75}.
\end{enumerate}
In any case we end in~\ref{enu:alg-list-MVA}--\ref{enu:alg-list-flow} with an expression for the throughput of the
shovel, which can be directly transformed  into the sought idling probability using the formula
$    1 - \pi_1^{(K)}(0) = \lambda_1(K) {\mmu_1}$ from  \prettyref{prop:pi0}.

We point out that the general purpose algorithms are run for the detailed 4-stage cycles, whereas the
special algorithm of  Stoyan \& Stoyan~\cite{stoyan;stoyan:71} is applied to the 2-stage cycle.  The latter is surely a drawback for precision because we neglect possible waiting times at station $3$ (unloading).

\paragraph{Remark}
We employed the two-parameter characterization of normal distributions in Algorithm~\ref{enu:alg-list-stoyan}, which
suggests as a possible candidate for comparison the Queueing Network Analyser (QNA),
developed by  Ward Whitt and coworkers for open networks of queues.
Whitt provided in~\cite{whitt:84a}  a thorough investigation and discussion  of how to use the QNA for open networks as a tool for approximating closed networks. He recommended mainly to use the FPM method
(Fixed Population Mean), where an open network with mean total population $K$
(of the closed network) is used. Whitt comments that the FPM does not perform well when there are only a few nodes  as in our mining system~\cite[p. 1916 and Section 7.3]{whitt:84a}.

\subsection{Algorithm based on the approximation of Stoyan \& Stoyan}\label{sect:AlgStSt}
The algorithm elaborates on the following input data.\\
 $K$= number of trucks; and
mean value and variance
$(\mmu_1, \sigma^2_1)$  of loading time at shovel for one truck,
$(\mmu_2, \sigma^2_2)$  of transport time from shovel to unload station,
$(\mmu_3, \sigma^2_3)$  of unload time for one truck,
 $(\mmu_4, \sigma^2_4)$ of travel time from unload station to shovel.

\begin{algorithm}
\begin{algorithmic}[1]
\Function{ST\&ST}{}\Comment{Calculate $\pi_1(0)$ and $\lambda_1$}
\State $ES \gets \mmu_1$
\State $ET \gets \mmu_2+\mmu_3+\mmu_4$
\State $VarT \gets \sigma_2^2+\sigma_3^2+\sigma_4^2$
\State $\sigma_W^2 \gets VarT +(K-1)\sigma_1^2$
\State $\mmu_W \gets$ solution of
$\mmu_W = ET-(K-1)\left(\mmu_W \Phi \left(\frac{\mmu_W}{\sigma_W}\right)
+\sigma_W \varphi \left(\frac{\mmu_W}{\sigma_W}\right)+ES\right)$
 \State $i(\mmu_W)  \gets \mmu_W \Phi \left(
\frac{\mmu_W}{\sigma_W}\right)
+\sigma_W \varphi \left(
\frac{\mmu_W}{\sigma_W}\right)$
 \State $\pi_1(0)\gets \frac{i(\mmu_W)}{i(\mmu_W)+ES}$
 \State $\lambda_1 \gets (1-\pi_1(0))\mmu_1$
 \State \Return $\pi_1(0),\lambda_1$
\EndFunction
\end{algorithmic}
\end{algorithm}

\section{Comparison of the  algorithms}
\label{sect:comparison}
We compared the algorithms~\ref{enu:alg-list-stoyan}--\ref{enu:alg-list-flow} 
with various parameter settings.
Because of the small size of the system none of the algorithm showed problems with runtime or memory.
Our focus therefore is only  precision which is determined by comparison with extensive simulations.
The simulations were run for the 4-station cycle as described in  \prettyref{sect:CyclicQueue},
i.e.\ we allowed (rare) queueing at the unloading station (node $3$) and did not enforce here the
additional assumption of non-overtaking which is introduced in the Stoyan \& Stoyan approximation
as an additional burden on its precision.
Similarly, the general purpose algorithms~\ref{enu:alg-list-MVA}--\ref{enu:alg-list-flow}
are applied to the more detailed and more realistic 4-station cycle. So in both cases we enhanced  precision, for the simulation as well as for
the general purpose algorithms. We therefore emphasize that the special purpose algorithm~\ref{enu:alg-list-stoyan}
will suffer from the simplifying assumption that the complete back-cycle is modeled by one queue.
Nevertheless, under normal conditions it shows superior precision.

\subsection{The loading process suffers only from small disturbances}
\label{sect:NoDisturbLoading}
The following results are derived under a typical and realistic set of parameters
given in \Vref{tab:ServiceTimes}. The main characteristic of these parameters is that
possible disturbances or interruptions of the shovel process (e.g.\ a loader does minor clearing work at the pit face, refuelling of vehicles, etc.)  are small enough to be classified as short term and can be covered by increasing the variability of the service times at node $1$, for more detailed discussion see~\cite[p. 171]{carmichael:86}.  We already discussed in  \prettyref{sect:ServiceTimeDistribution}
the problem of variability of the loading time at the shovel and will discuss  further details in \prettyref{sect:DisturbLoading}.
\begin{table}[!h]
\begin{center}
\begin{tabular}{lrr}
 & mean $(\mmu_{\bullet})$ in minutes & coefficient of variation $(C_{\bullet})$\\
 \midrule
loading time & 1.5 & 0.25\\
travel time loaded & 6.0 & 0.2 \\
unloading time & 1.0 & 0.1\\
travel time unloaded &4.0 & 0.2
\end{tabular}
\caption{Mean and coefficient of variation of service times.\label{tab:ServiceTimes}}
\end{center}
\end{table}
With the parameters given in \Vref{tab:ServiceTimes} we have run the simulation and the algorithms for  systems with $1$ to $10$ trucks. \Vref{fig:P0-Values-NoD} shows the absolute values for $\pi_1^{(K)}(0)$ while \Vref{fig:P0-AbsError-NoD}
shows the absolute deviation between the idle probabilities obtained by the respective algorithms
and the simulation. The following conclusion can be drawn from the figures:
\begin{ignorechecktex}
\begin{enumerate}[label=(\roman*)]
\item
For truck numbers up to $4$ all approximation values are close to the values obtained in the simulation.
\item
The deterministic FLOW approximation (coefficient of variation of service times = $0$)
is always a lower bound for the simulated values while the MVA (coefficient of variation of service times = $1$) is always an upper bound. Moreover, our study confirms for up to $8$ trucks the observation of Carmichael~\cite[p.169]{carmichael:86} that the errors of these approximations are of same magnitude with opposite sign.
\item
The algorithm of Stoyan \& Stoyan outperforms in the range of $1$ to $8$ trucks all other algorithms,
and is for $9$ to $10$ trucks almost good as GMVA\@. The latter does not fit the simulation results for
less than $9$ trucks and is worst for truck numbers below $6$.
\end{enumerate}
\end{ignorechecktex}
So the overall conclusion is that we can recommend to use the special algorithm~\ref{enu:alg-list-stoyan}
as long as the mentioned conditions for small variability are met.
Carmichael~\cite[p.171]{carmichael:86} suggests  that interruptions which are shorter than half of the cycle time for one truck should be incorporated into the service time variation.

\paragraph{Remark}
The observation that the MVA approximation, which needs a robustness property of the system under deviation from exponential assumptions, is not very good, is in line with a recommendation of
Bolch et al.~\cite[pp. 488, 489]{bolch;greiner;demeer;trivedi:06} not to use this product form approximation for   open tandems under FCFS with non exponential service times.

\subsection{Large disturbances of the loading process}\label{sect:DisturbLoading}
\begin{ignorechecktex}
In this section we consider a situation were the bound suggested by Carmichael: ``The interruptions  are shorter than half of the cycle time for one truck,'' is not met. The modeling assumption for the
interrupt processes are as follows.\\
If interruptions occur which cannot be neglected
it is realistic to assume that disturbances at the shovel
appear during its work time according to a Poisson process of
intensity $\alpha$, as argued by Stoyan and Stoyan~\cite{stoyan;stoyan:71} and 
Carmichael~\cite{carmichael:87}. The duration of a repair time as a result of a disturbance follows
an exponential distribution with parameter $\beta$, as several
authors recommend, e.g. Stoyan and Stoyan~\cite{stoyan;stoyan:71} and
Peng et al.~\cite{peng;zhang;xi:88}, see \Vref{tab:ServiceTimesBreakdown} for a realistic setting.
\end{ignorechecktex}
\begin{table}[!h]
\center
\begin{tabular}{lrr}
 &mean in minutes\\
\midrule
up-time of shovel & $\alpha^{-1} = 300$\\
 repair-time of shovel & $\beta^{-1} = 30$\\
\end{tabular}
\caption{Means of up and down time}
\label{tab:ServiceTimesBreakdown}
\end{table}
\subsubsection{Large disturbances of the loading process: Direct algorithm ST\&ST}
\label{sect:DisturbLoadingDirectAlg}
In this section  we apply the six algorithms listed in \prettyref{sect:GeneralAlgEvaluation} on
page \pageref{page:ListAlg} to the system with large disturbances originating from breakdown and repair
interruption of the shovel.\\
Recall that the service time at the shovel (loading time) is ${\cal N}(\mmu_1,\sigma^2_1)$ distributed.
Take $S_1\sim {\cal N}(\mmu_1,\sigma^2_1)$ and $X\sim \expdist(\alpha)$
as a typical uptime, $Y\sim \expdist(\beta)$
as a typical repair time (down time of the shovel) Then the modified service time at node $1$ is
\begin{equation}\label{eq:ServiceModif}
S_1^{(m)} = 
\begin{cases}
S_1 & \text{if $S_1 < X$},\\
S_1 + Y &\text{if $S_1 \geq X$}.
\end{cases}
\end{equation}

We assume the random variables $S_1, X, Y$ to be independent.
The modified service times $S_1^{(m)} = S_1 + 1_{\{X<S_1\}}\cdot Y$ are not normally distributed.
To apply a two-parameter approximation   we obtain by direct evaluation
\begin{prop}\label{prop:ModifiedService}
\begin{align*}
E S_1^{(m)} &= \mmu_1 + \frac{1}{\beta} P(X< S_1)\\
&=
\mmu_1 + \frac{1}{\beta} \left(\Phi\left(\frac{\mmu_1}{\sigma_1}\right)
- \exp\left(\frac{\alpha^2\sigma_1^2}{2} - \alpha\mmu_1\right)
\Phi\left(\frac{\mmu_1}{\sigma_1}-\alpha \sigma_1 \right)\right)\\
Var S_1^{(m)} &=
\sigma_1^2 + 2 E(1_{\{X<S_1\}}\cdot S_1) \frac{2}{\beta^2} +
P(X< S_1)\left(\frac{2}{\beta^2} - \frac{\mmu_1}{\beta}-\frac{1}{\beta^2} P(X< S_1)\right)\\
\end{align*}
with
\begin{align*}
E(1_{\{X<S_1\}}\cdot S_1)&=
\Phi\left(\frac{\mmu_1}{\sigma_1}\right) (\mmu_1-\alpha\sigma_1^2)
+\varphi\left(\frac{\mmu_1}{\sigma_1}\right) \sigma_1\\
&\alignphantomspace - \exp\left(\frac{\alpha^2\sigma_1^2}{2} - \alpha\mmu_1\right) \\
&\alignphantomspace \cdot 
\left(\Phi\left(\frac{\mmu_1-\alpha\sigma_1^2}{\sigma_1}\right) (\mmu_1-\alpha\sigma_1^2)
+\varphi\left(\frac{\mmu_1-\alpha\sigma_1^2}{\sigma_1}\right) \sigma_1\ \right)
\end{align*}
\end{prop}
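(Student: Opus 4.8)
The plan is to write the modified service time in the additive form $S_1^{(m)} = S_1 + A\,Y$ with the indicator $A := 1_{\{X<S_1\}}$, and to reduce both moments to the two scalar quantities $p := P(X<S_1)$ and $m := E[S_1\,1_{\{X<S_1\}}]$. The two structural facts that make this work are that $Y$ is independent of the pair $(S_1,X)$, hence of $A$, together with the elementary exponential moments $E Y = 1/\beta$ and $E Y^2 = 2/\beta^2$, and that the indicator satisfies $A^2 = A$. Once the mean and variance are expressed through $p$ and $m$, the only genuine work left is to evaluate these two Gaussian integrals.

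First I would treat the mean. By independence of $Y$, $E S_1^{(m)} = \mmu_1 + E[A]\,E Y = \mmu_1 + \tfrac{1}{\beta}\,p$, which is the first displayed identity. To evaluate $p$ I condition on $S_1$: since $X\ge 0$ almost surely, $P(X<s)=0$ for $s\le 0$ and $P(X<s)=1-e^{-\alpha s}$ for $s>0$, so that $p = P(S_1>0) - E\!\left[e^{-\alpha S_1}1_{\{S_1>0\}}\right]$. The first term is $\Phi(\mmu_1/\sigma_1)$ by the symmetry of the normal law. The second is the key Gaussian--exponential integral; completing the square in the exponent (equivalently, recognising an exponentially tilted normal) turns $e^{-\alpha s}$ times the $\mathcal N(\mmu_1,\sigma_1^2)$ density into $e^{\alpha^2\sigma_1^2/2-\alpha\mmu_1}$ times the $\mathcal N(\mmu_1-\alpha\sigma_1^2,\sigma_1^2)$ density, and integrating the latter over $(0,\infty)$ yields $e^{\alpha^2\sigma_1^2/2-\alpha\mmu_1}\,\Phi(\mmu_1/\sigma_1-\alpha\sigma_1)$. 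This reproduces the stated formula for $E S_1^{(m)}$.

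For the variance I would use the decomposition $\operatorname{Var}(S_1+AY)=\operatorname{Var}(S_1)+2\operatorname{Cov}(S_1,AY)+\operatorname{Var}(AY)$. Using independence of $Y$, the identity $A^2=A$, and the exponential moments of $Y$, every summand collapses to a combination of $p$, $m$, $\mmu_1$ and $\sigma_1$; in particular $\operatorname{Cov}(S_1,AY)=\tfrac1\beta\big(m-\mmu_1 p\big)$ and $\operatorname{Var}(AY)=\tfrac{2}{\beta^2}p-\tfrac1{\beta^2}p^2$. It remains to compute $m=E[S_1 1_{\{X<S_1\}}]$, which by the same conditioning equals $E[S_1 1_{\{S_1>0\}}]-E[S_1 e^{-\alpha S_1}1_{\{S_1>0\}}]$. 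The first term is the standard truncated first moment $\mmu_1\Phi(\mmu_1/\sigma_1)+\sigma_1\varphi(\mmu_1/\sigma_1)$; the second I would obtain either by the same tilting step, reducing it to the truncated first moment of the shifted normal $\mathcal N(\mmu_1-\alpha\sigma_1^2,\sigma_1^2)$, or, more slickly, by differentiating the identity for $E[e^{-\alpha S_1}1_{\{S_1>0\}}]$ in $\alpha$. Either route produces the closed form for $E[S_1 1_{\{X<S_1\}}]$ in the statement.

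The main obstacle is purely computational and concentrated in these two Gaussian integrals with the half-line truncation and the exponential weight $e^{-\alpha s}$; the completing-the-square / exponential-tilting manipulation is what carries the whole proof, and care is needed with the range of integration because $S_1$, being normal, can be negative while $X$ is nonnegative. Conditioning on $S_1=s$ and using $P(X<s)=0$ for $s\le 0$ keeps the integrals automatically restricted to $(0,\infty)$ and avoids any sign error. No distributional input beyond the normality of $S_1$ and the two exponential moments of $Y$ is required.
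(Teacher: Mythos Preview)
Your proposal is correct and follows essentially the same route as the paper: write $S_1^{(m)}=S_1+1_{\{X<S_1\}}Y$, exploit the independence of $Y$ from $(S_1,X)$ together with $EY=1/\beta$, $EY^2=2/\beta^2$, and reduce everything to the two scalars $p=P(X<S_1)$ and $m=E[S_1\,1_{\{X<S_1\}}]$, which are then evaluated by completing the square in the Gaussian--exponential integrand.

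The only noteworthy deviation is in the evaluation of $p$. The paper conditions on $X$, obtains $\int_0^\infty F_{S_1}(t)\,\alpha e^{-\alpha t}\,dt$, and then integrates by parts to bring in the normal density before completing the square; you instead condition on $S_1$ and write $p=P(S_1>0)-E[e^{-\alpha S_1}1_{\{S_1>0\}}]$ directly. Your route is slightly more economical (it avoids the integration-by-parts detour and keeps both computations for $p$ and $m$ structurally identical), and your observation that $m$ can alternatively be obtained by differentiating $E[e^{-\alpha S_1}1_{\{S_1>0\}}]$ in $\alpha$ is a nice shortcut the paper does not use. For the variance the paper computes $E[(S_1^{(m)})^2]$ and subtracts the square of the mean, whereas you use the bilinear decomposition $\operatorname{Var}S_1+2\operatorname{Cov}(S_1,AY)+\operatorname{Var}(AY)$; these are of course equivalent bookkeeping.
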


Following modeling assumptions of the engineering literature and the principles described in the
 previous sections we assume for the modified  service time $S_1^{(m)}$ that it is normally distributed, ${\cal N}(\mmu_1(m),\sigma_1^2(m))$, with parameters obtained in \prettyref{prop:ModifiedService}.
\begin{align}\label{eq:ModifiedParameterE}
\mmu_1(m) &= E S_1^{(m)}& \sigma_1^2(m) &=Var S_1^{(m)}
\end{align}

With the data from \Vref{tab:ServiceTimes} and \Vref{tab:ServiceTimesBreakdown} we obtain modified mean and variance for node $1$ as given in  \Vref{tab:ServiceTimesModified}.
\begin{table}[!h]
\qquad\qquad\qquad\begin{tabular}{lrr}
 & \\
\midrule
modified mean of shovel service time & $\mmu_1(m) = 1.649603$\\
modified variance of shovel service time& $\sigma_1^2(m) =  9.122382$\\
\end{tabular}
\caption{Parameters of modified service time}
\label{tab:ServiceTimesModified}
\end{table}
With these new parameters for the shovel service time and the old ones for the other service times we have run the simulation and the algorithms for  systems with $1$ to $10$ trucks.  \Vref{fig:P0-Values-WithD,tab:P0-Values-WithD} show the absolute values for $\pi_1^{(K)}(0)$ while  \Vref{fig:P0-AbsError-WithD,tab:P0-AbsError-WithD}
show the absolute deviation between the idle probabilities obtained by the respective algorithms
and the simulation. The following conclusion can be drawn from the figures:
\begin{ignorechecktex}
\begin{enumerate}[label=(\roman*)]
\item
The precision of the algorithm of ST\&ST decreased dramatically, and for more than $5$ trucks it is the worst.
\item
Again the deterministic FLOW approximation (coefficient of variation of service times = $0$)
is always a lower bound for the simulated values while  the MVA (coefficient of variation of service times = $1$) is always an upper bound.\\
\item
Astonishingly, the MVA outperforms all other algorithms up to $8$  trucks cycling. It is beaten by the deterministic FLOW  approximation for $9$ to $10$ trucks, but as seen from 
\Vref{fig:P0-AbsError-WithD} below $9$ trucks MVA precision  is strictly better than that of FLOW\@.
\end{enumerate}
\end{ignorechecktex}

A possible explanation for the good performance of MVA in this parameter setting may be found by comparing coefficients of variation at node $1$: For MVA it is $1$ from the definition of
exponential distributions, while for all other algorithms it is set approximately $2$ (exception: FLOW).

\paragraph{Remark}
  The data in  \Vref{tab:ServiceTimes} shows that  maximal cycle time of a truck is reached if the truck finds at nodes $1$ and $3$ on its arrival all other trucks in front there. This results in a cycle time of $\sim \SI{20}{\minute}$. Carmichael's recommended limit of the interrupt time for application of queueing models of the type considered here therefore is ``lower than $\SI{10}{\minute}$''. The mean interrupt time
in our example is $\SI{30}{\minute}$, see \Vref{tab:ServiceTimesBreakdown}.

\subsubsection{Large disturbances of the loading process: Modified algorithm ST\&ST-m}
\label{sect:DisturbLoadingModifiedAlg}
The discussion in \prettyref{sect:DisturbLoadingDirectAlg} of the poor behavior of
the algorithm~\ref{enu:alg-list-stoyan} based on the approximation developed by Stoyan \& Stoyan posed the question whether that algorithm can be modified in a way that its precision is enhanced  and its simplicity is preserved. A first hint on how to proceed is given
by Carmichael~\cite{carmichael:86}. He recommends in case of large disturbances to distinguish between times of normal usage for the shovel and times when the
shovel is out of order.
\begin{ignorechecktex}
\begin{enumerate}
\item
The out-of-order times should be excluded because during these times no contribution to the (annual) capacity of the system is possible.

\item
The capacity during times of normal usage for the shovel should then be evaluated by
the standard algorithms.

\item
This recommendation leads us to proceed as follows:\\
Compute by algorithm \ref{enu:alg-list-stoyan}
the annual capacity and reduce that value by the factor
\begin{equation}\label{eq:ReduceFactor0}
\psi = \frac{\text{up-time of shovel}}{\text{up-time of shovel + down-time of shovel}} = \frac{\alpha^{-1}}{\alpha^{-1}+\beta^{-1}}.
\end{equation}
\end{enumerate}
\end{ignorechecktex}
We performed the respective experiments which revealed that algorithm~\ref{enu:alg-list-stoyan}  modified in this way
cannot compete with the best general purpose algorithms.\\

A successful resolution of the problem is as follows. In performing
the modification~\eqref{eq:ReduceFactor0}
we neglect the fact that a breakdown of the shovel can only occur if it is busy. To overcome this
shortage we replace~\eqref{eq:ReduceFactor0} by the following factor $\Psi$ which is obtained by a rough regeneration argument which we apply to the system when out-of-order times cannot be neglected
and we can identify periods when the system is in normal usage.
\begin{enumerate}
  \item For any service at the shovel in normal usage we perform a Bernoulli experiment (independent of the history of the system, at the end of the service periods, say)  with success probability
  $p=P(S_1>X)$, see~\eqref{eq:ServiceModif}. The mean number of non-interrupted services is $1/p$.
  \item The mean time between two time instants when service expires without intermediate interrupt by breakdown (i.e.\ under normal usage) is
  $\lambda_1^{-1}$.
  \item The time until breakdown when starting in normal usage is $\lambda_1^{-1} \cdot 1/p$ and
  thereafter starts a down time of mean length $1/\beta$. The relevant cycle length is
   $\lambda_1^{-1} \cdot 1/p + 1/\beta$.
  \item The proportion of time during that cycle when the shovel is up (not necessarily productive because of idling)  is therefore
  \begin{equation*}
  \Psi = \frac{\lambda_1^{-1} \cdot 1/p }{\lambda_1^{-1} \cdot 1/p + 1/\beta}.
  \end{equation*}
  \item
  Because the shovel breaks down only when it is busy, during breakdown times the queue length at the shovel (server $1$) is positive, i.e.\ the down times do not contribute to the idle times of the shovel.
  So the overall idle probability for the shovel in case of large disturbances ($=\ell d$) is with
  $\pi_1^{(K)}(0)$ computed by algorithm~\ref{enu:alg-list-stoyan} for the system without disturbances (under normal conditions)
  \begin{equation}\label{eq:Psi-Modified}
  \pi_1^{(\ell d, K)}(0)= \pi_1^{(K)}(0) \cdot \Psi = \pi_1^{(K)}(0) \cdot\frac{\lambda_1^{-1} \cdot 1/p }{\lambda_1^{-1} \cdot 1/p + 1/\beta}.
  \end{equation}
\end{enumerate}
The formula \eqref{eq:Psi-Modified} can be applied to any network with any algorithm which provides idle probability
$\pi_1^{(\ell d, K)}(0)$ and throughput $\lambda_1$.
If these values where obtained using ST\&ST-algorithm
we call the algorithm:
\begin{enumerate}[label=$\langle0-m\rangle$,leftmargin=1.6cm]
\item\label{enu:alg-list-stoyan-m}
  ST\&ST-m modified ST\&ST for problems with large disturbances.
\end{enumerate}

\begin{algorithm}
\caption{Modified Stoyan \& Stoyan algorithm for large disturbances}\label{alg:stoyan-and-stoyan-m}
\begin{algorithmic}[1]
	\Function{ST\&ST-m}{$\alpha$, $\beta$}\Comment{Calculate $\pi_1^{(\ell d, K)}(0)$}
\State $p \gets \Phi\left(\frac{\mu}{\sigma}\right)-\exp\left(\frac{\alpha^{2}\sigma^{2}}{2}-\alpha\mu\right)\Phi\left(\frac{\mu}{\sigma}-\alpha\sigma\right)$.
	\State  $(\pi^{(K)}_1{(0)},\lambda_1) \gets ST\&ST()$
	\State $\Psi \gets \frac{\lambda_1^{-1} \cdot 1/p }{\lambda_1^{-1} \cdot 1/p + 1/\beta}$
	\State $\pi_1^{(\ell d, K)}(0)\gets\pi_1^{(K)}{(0)}\cdot\Psi$
 \State \Return $\pi_1^{(\ell d, K)}(0)$
\EndFunction
\end{algorithmic}
\end{algorithm}

We performed the respective experiments, i.e.\ the simulation, and algorithms~\ref{enu:alg-list-stoyan}--\ref{enu:alg-list-flow}, and a modified version~\ref{enu:alg-list-stoyan-m} which incorporates~\eqref{eq:Psi-Modified} plus~\ref{enu:alg-list-stoyan}, as described above.
The data are the same as in \prettyref{sect:DisturbLoadingDirectAlg}.
In \Vref{fig:P0-Values-WithD,tab:P0-Values-WithD} we report the respective values of $\pi_1^{(\ell d, K)}(0)$. We see that the modified  version~\ref{enu:alg-list-stoyan-m} of the algorithm based on the approximation of Stoyan \& Stoyan is
extremely close to the simulated values for all sizes of the truck fleet.
In \Vref{fig:P0-AbsError-WithD,tab:P0-AbsError-WithD}
 we see that indeed algorithm~\ref{enu:alg-list-stoyan-m} outperforms all other
algorithms for fleet sizes $<10$. ST\&ST~\ref{enu:alg-list-stoyan} is as good as~\ref{enu:alg-list-stoyan-m} for only $1$ and $2$ trucks.
Only for $1$ truck FLOW is as good as~\ref{enu:alg-list-stoyan-m} and it is slightly better than~\ref{enu:alg-list-stoyan-m} for $10$ trucks.

Summarizing, we can say that the simple modification makes
version~\ref{enu:alg-list-stoyan-m} 
of the algorithm
ST\&ST for the computation of annual capacity for
the shovel superior to all its competitors.

\paragraph{Remarks about numerical results}
For simulation we used a discrete-event based simulation written in 
\emph{python\/} using \emph{SimPy\/} version 3.0.5 \cite{simpy:v3.0.5}. The simulation starts with full node 1 and then runs only once for \num{1000000} time units.

We compared the results with \emph{JMT-Java Modelling Tools} version 0.9.1~\cite{jmt:v0.9.1}, the results  for simulation are similar.

When there is only one truck in the system, an exact idle probability for node 1 can be obtained: It is $1-{\mmu_1}/({\sum_{i=1}^4 \mmu_i})$. This formula can be used for consistency check of simulations and approximations.

\vspace{3cm}

\begin{center}
\includegraphics[draft=false]{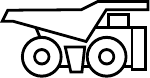}
\end{center} 


\newpage
\begin{figure}[H]
\center
\includegraphics{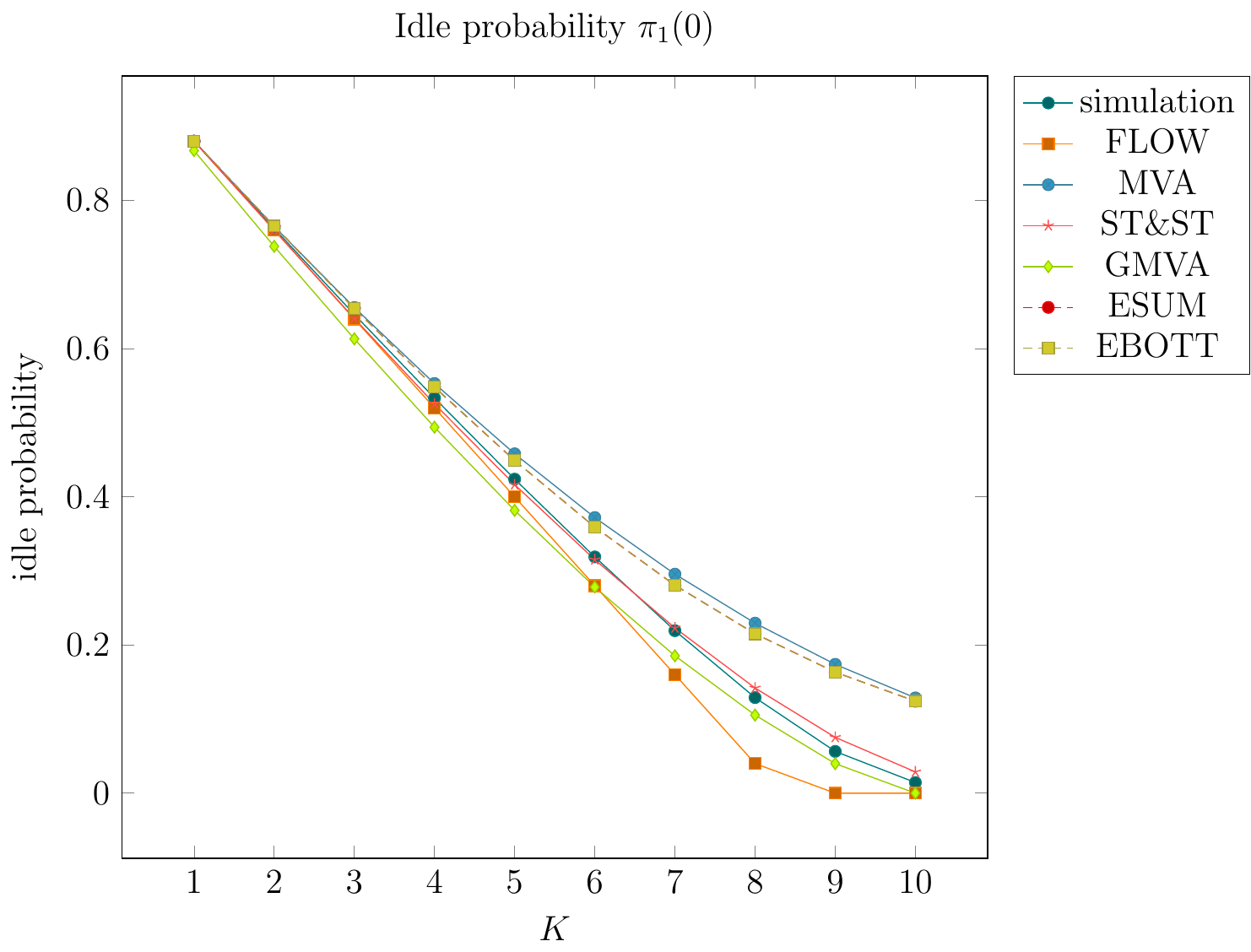}
\caption{Long-run idle probabilities for node 1 in a system with $K$ trucks $\pi_1^{(K)}(0)$, obtained through simulation and different approximation methods.\label{fig:P0-Values-NoD}}
\end{figure}

\begin{table}[H]
\center
\begingroup\small
\begin{tabular}{rrrrrrrrrrr}
  \toprule
 & K=1 & K=2 & K=3 & K=4 & K=5 & K=6 & K=7 & K=8 & K=9 & K=10 \\ 
  \midrule
simulation & 0.880 & 0.762 & 0.646 & 0.533 & 0.424 & 0.319 & 0.219 & 0.129 & 0.056 & 0.014 \\ 
  FLOW & 0.880 & 0.760 & 0.640 & 0.520 & 0.400 & 0.280 & 0.160 & 0.040 & 0.000 & 0.000 \\ 
  MVA & 0.880 & 0.765 & 0.656 & 0.553 & 0.458 & 0.372 & 0.296 & 0.229 & 0.174 & 0.129 \\ 
  ST\&ST & 0.880 & 0.760 & 0.641 & 0.525 & 0.416 & 0.315 & 0.223 & 0.142 & 0.075 & 0.029 \\ 
  GMVA & 0.867 & 0.738 & 0.613 & 0.494 & 0.382 & 0.278 & 0.185 & 0.105 & 0.040 & 0.000 \\ 
  ESUM & 0.880 & 0.766 & 0.654 & 0.548 & 0.449 & 0.359 & 0.281 & 0.215 & 0.163 & 0.124 \\ 
  EBOTT & 0.880 & 0.766 & 0.654 & 0.548 & 0.449 & 0.359 & 0.281 & 0.215 & 0.163 & 0.124 \\ 
   \bottomrule
\end{tabular}
\endgroup

\caption{Long-run idle probabilities for node 1 in a system with $K$ trucks. The values are rounded to three decimal places. The exact probability for $K=1$  is $0.880$.\label{tab:P0-Values-NoD}}
\end{table}

\begin{figure}[H]
\center
\includegraphics{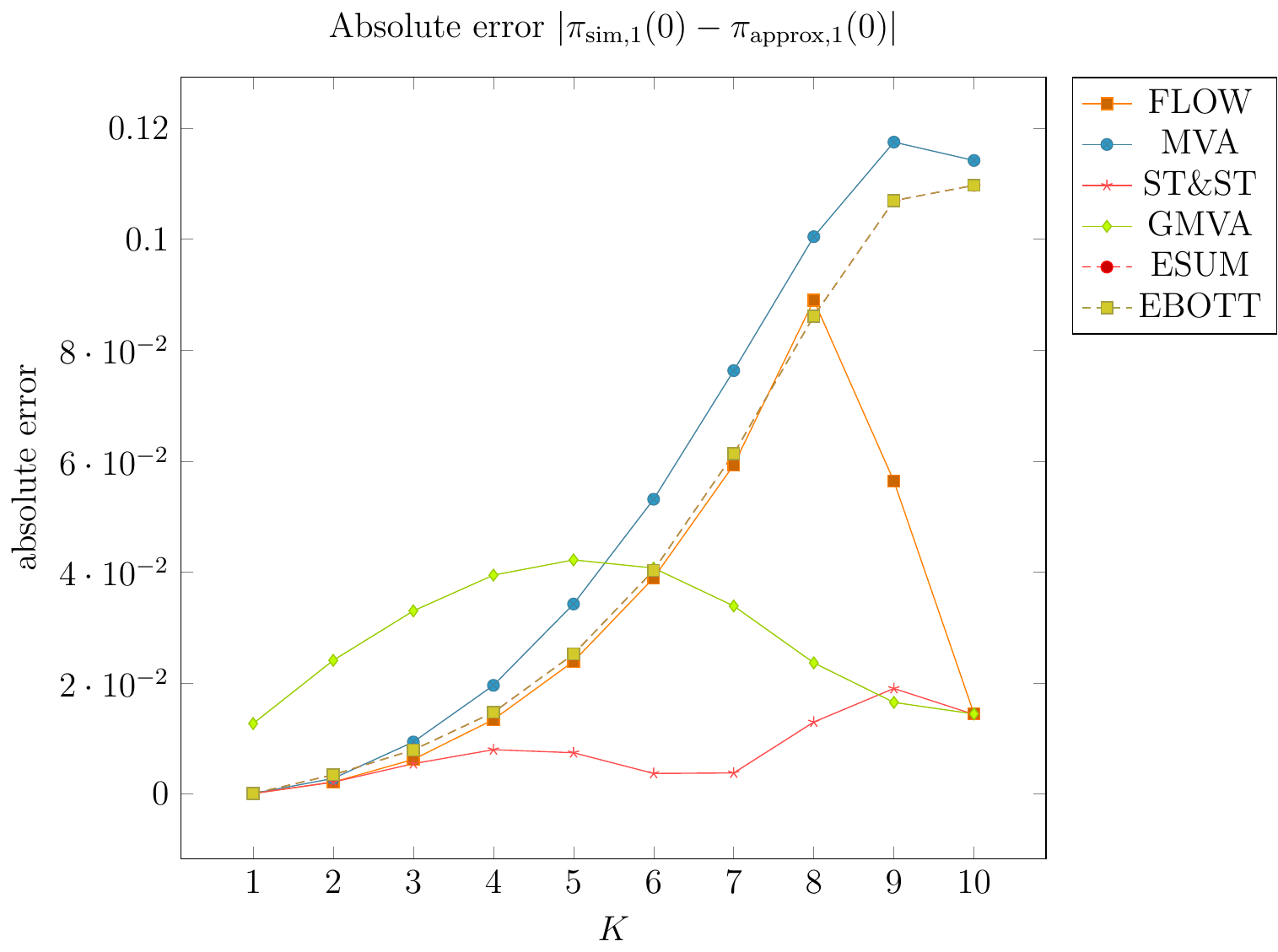}
\caption{\label{fig:P0-AbsError-NoD}}
\end{figure}

\begin{table}[H]
\center
\begingroup\small
\begin{tabular}{rllllllllll}
  \toprule
 & K=1 & K=2 & K=3 & K=4 & K=5 & K=6 & K=7 & K=8 & K=9 & K=10 \\ 
  \midrule
FLOW & \textbf{0.000} & \textbf{0.002} & 0.006 & 0.013 & 0.024 & 0.039 & 0.059 & 0.089 & 0.056 & \textbf{0.014} \\ 
  MVA & \textbf{0.000} & 0.003 & 0.009 & 0.020 & 0.034 & 0.053 & 0.076 & 0.100 & 0.118 & 0.114 \\ 
  ST\&ST & \textbf{0.000} & \textbf{0.002} & \textbf{0.005} & \textbf{0.008} & \textbf{0.007} & \textbf{0.004} & \textbf{0.004} & \textbf{0.013} & 0.019 & \textbf{0.014} \\ 
  GMVA & 0.013 & 0.024 & 0.033 & 0.039 & 0.042 & 0.041 & 0.034 & 0.024 & \textbf{0.016} & \textbf{0.014} \\ 
  ESUM & \textbf{0.000} & 0.003 & 0.008 & 0.015 & 0.025 & 0.040 & 0.061 & 0.086 & 0.107 & 0.110 \\ 
  EBOTT & \textbf{0.000} & 0.003 & 0.008 & 0.015 & 0.025 & 0.040 & 0.061 & 0.086 & 0.107 & 0.110 \\ 
   \bottomrule
\end{tabular}
\endgroup

\caption{Absolute errors of approximation compared with simulation results. The errors are rounded to three decimal places.\label{tab:P0-AbsError-NoD}}
\end{table}

\begin{figure}[H]
\center
\includegraphics{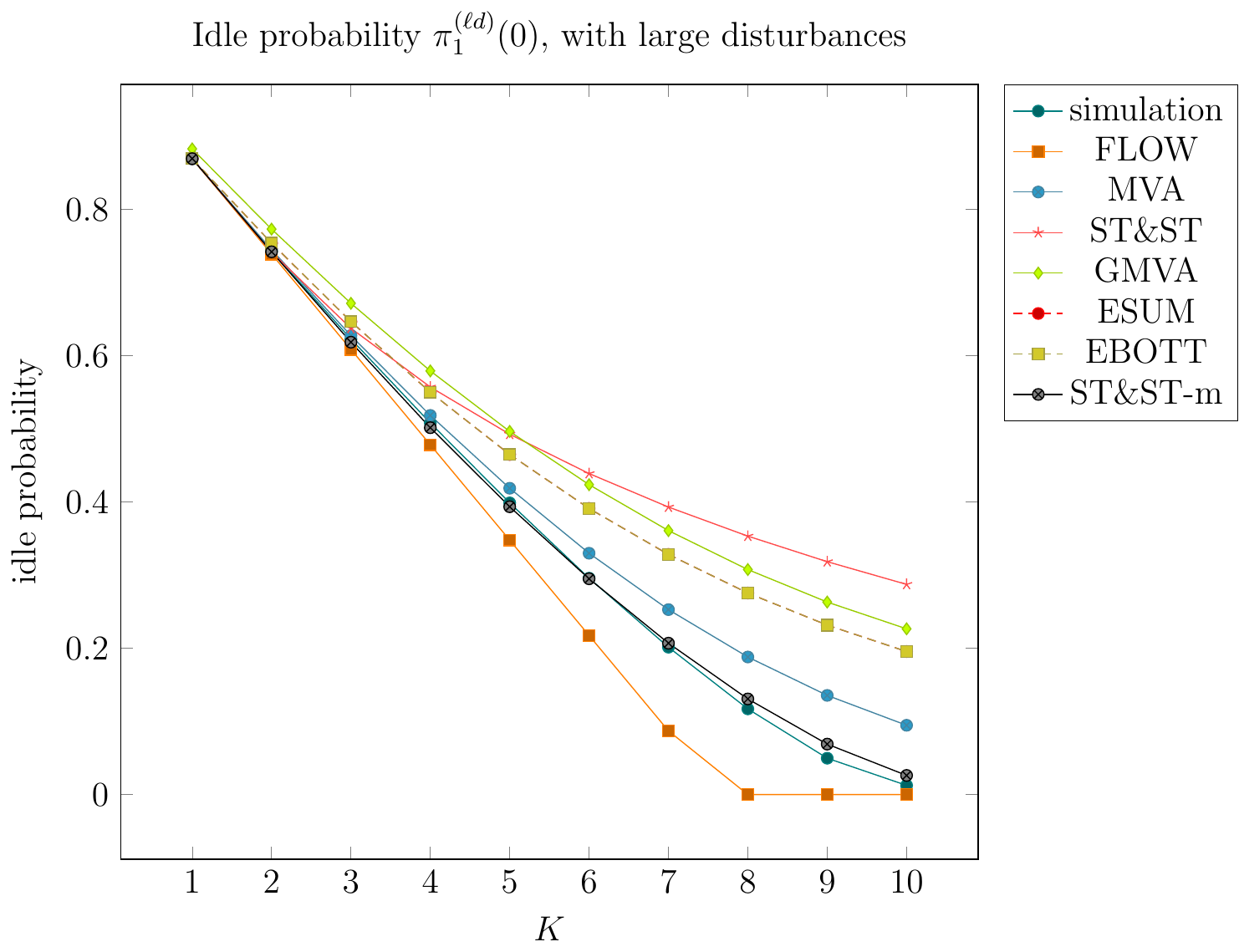}
\caption{Long-run idle probabilities for node 1 in  the model with large disturbances with $K$ trucks $\pi_1^{(\ell d, K)}(0)$, obtained through simulation and different approximation methods.\label{fig:P0-Values-WithD}}
\end{figure}

\begin{table}[H]
\center
\begingroup\small
\begin{tabular}{rrrrrrrrrrr}
  \toprule
 & K=1 & K=2 & K=3 & K=4 & K=5 & K=6 & K=7 & K=8 & K=9 & K=10 \\ 
  \midrule
simulation & 0.869 & 0.743 & 0.623 & 0.508 & 0.399 & 0.296 & 0.202 & 0.117 & 0.050 & 0.013 \\ 
  FLOW & 0.870 & 0.739 & 0.609 & 0.478 & 0.348 & 0.218 & 0.087 & 0.000 & 0.000 & 0.000 \\ 
  MVA & 0.870 & 0.745 & 0.628 & 0.519 & 0.419 & 0.330 & 0.253 & 0.188 & 0.136 & 0.095 \\ 
  ST\&ST & 0.870 & 0.742 & 0.638 & 0.557 & 0.493 & 0.439 & 0.393 & 0.353 & 0.318 & 0.287 \\ 
  GMVA & 0.883 & 0.773 & 0.672 & 0.579 & 0.497 & 0.424 & 0.361 & 0.308 & 0.263 & 0.227 \\ 
  ESUM & 0.870 & 0.754 & 0.647 & 0.550 & 0.465 & 0.391 & 0.328 & 0.276 & 0.232 & 0.196 \\ 
  EBOTT & 0.870 & 0.754 & 0.647 & 0.550 & 0.465 & 0.391 & 0.328 & 0.276 & 0.232 & 0.196 \\ 
  ST\&ST-m & 0.870 & 0.742 & 0.619 & 0.502 & 0.394 & 0.295 & 0.207 & 0.131 & 0.069 & 0.026 \\ 
   \bottomrule
\end{tabular}
\endgroup

\caption{Long-run idle probabilities for node 1 in the model with large disturbances with $K$ trucks. The values are rounded to three decimal places. The exact idle probability for $K=1$ is approximately $0.8695925$.\label{tab:P0-Values-WithD}}
\end{table}

\begin{figure}[H]
\center
\includegraphics{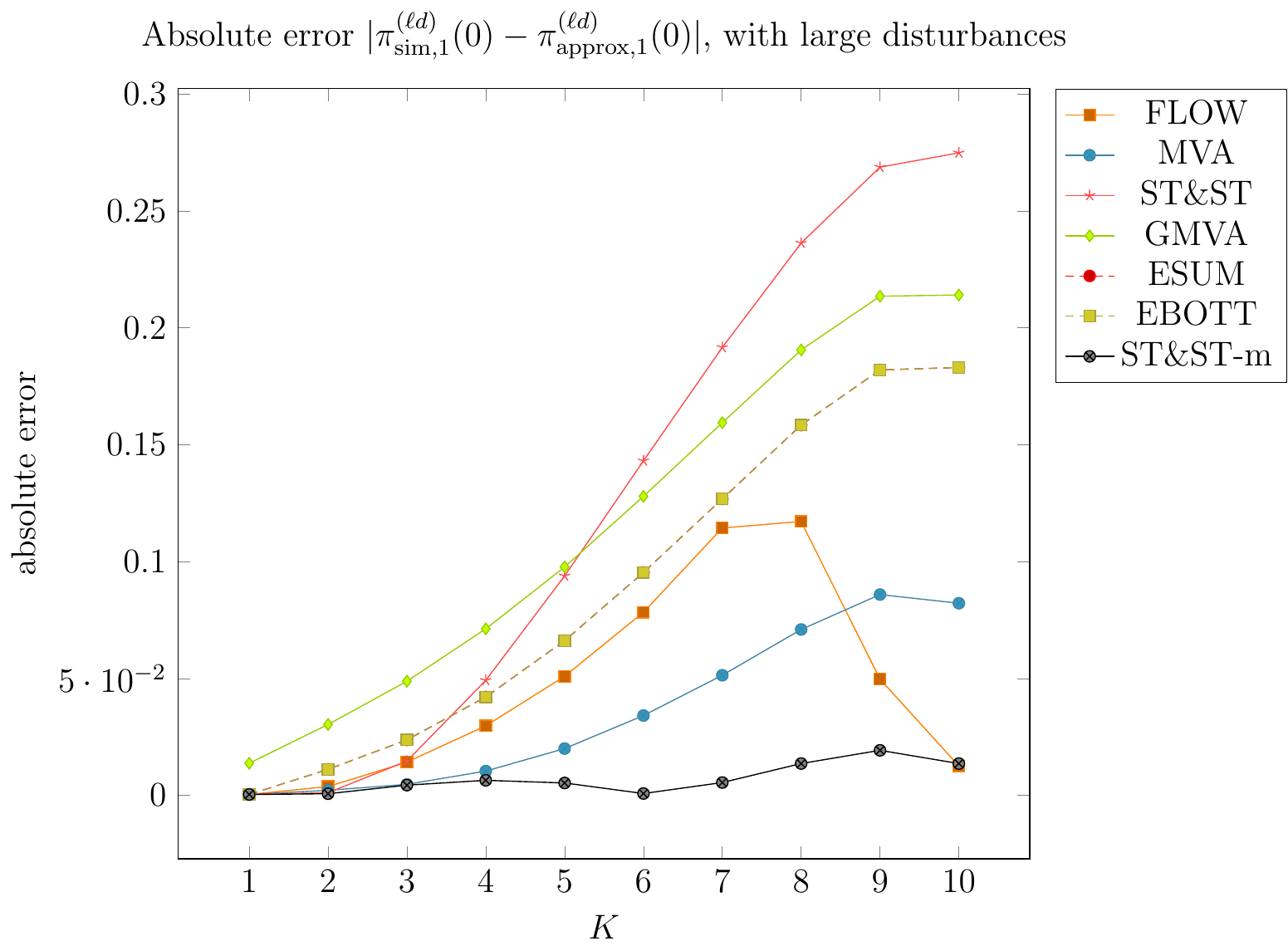}
\caption{\label{fig:P0-AbsError-WithD}}
\end{figure}

\begin{table}[H]
\center
\begingroup\small
\begin{tabular}{rllllllllll}
  \toprule
 & K=1 & K=2 & K=3 & K=4 & K=5 & K=6 & K=7 & K=8 & K=9 & K=10 \\ 
  \midrule
FLOW & \textbf{0.000} & 0.004 & 0.014 & 0.030 & 0.051 & 0.078 & 0.114 & 0.117 & 0.050 & \textbf{0.013} \\ 
  MVA & \textbf{0.000} & 0.002 & 0.005 & 0.010 & 0.020 & 0.034 & 0.051 & 0.071 & 0.086 & 0.082 \\ 
  ST\&ST & \textbf{0.000} & \textbf{0.001} & 0.015 & 0.049 & 0.094 & 0.143 & 0.192 & 0.236 & 0.269 & 0.275 \\ 
  GMVA & 0.014 & 0.030 & 0.049 & 0.071 & 0.098 & 0.128 & 0.159 & 0.191 & 0.214 & 0.214 \\ 
  ESUM & \textbf{0.000} & 0.011 & 0.024 & 0.042 & 0.066 & 0.095 & 0.127 & 0.158 & 0.182 & 0.183 \\ 
  EBOTT & \textbf{0.000} & 0.011 & 0.024 & 0.042 & 0.066 & 0.095 & 0.127 & 0.158 & 0.182 & 0.183 \\ 
  ST\&ST-m & \textbf{0.000} & \textbf{0.001} & \textbf{0.004} & \textbf{0.006} & \textbf{0.005} & \textbf{0.001} & \textbf{0.005} & \textbf{0.014} & \textbf{0.019} & 0.014 \\ 
   \bottomrule
\end{tabular}
\endgroup

\caption{Absolute errors of approximation compared with simulation results in the model with large disturbances. The errors are rounded to three decimal places.\label{tab:P0-AbsError-WithD}}
\end{table}

\newpage
\appendix

\section{General purpose algorithms}\label{sect:SurveyGenPurpose}
\subsection{Gordon-Newell networks}\label{sect:GN}
A Gordon-Newell network~\cite{gordon;newell:67} with node set
$\Jset:= \{1,\dots,J\}$ has a fixed number $K$ of customers.
Customers are indistinguishable,
 follow the same rules, and request for  exponentially distributed service
 at all nodes. All these requests constitute an independent family
of variables. Nodes are exponential single servers with state
dependent service rates and  infinite waiting room under FCFS
regime. If at node $i$ there are $n_i>0$ customers, either
in service or waiting, service is provided there with intensity
$\mu_i(n_i)>0$.
Routing is Markovian,
a customer departing from node $i$ immediately proceeds to node
$j$ with probability $r{(i,j)}\geq 0$.
We assume that the  routing matrix
$r=(r(i,j):{i,j\in \Jset})$ is irreducible. Then
the traffic equations
\begin{equation} \label{eq:RS-traffic}
\eta_j=\sum_{i=1}^J \eta_i r(i,j),\quad\quad
j\in\Jset,
\end{equation}
have a unique stochastic solution, which we denote by $\eta = (\eta_j:j\in\Jset)$.

Note, that in~\cite{bolch;greiner;demeer;trivedi:06}, the solution of the traffic equation  \eqref{eq:RS-traffic} is normalized differently. Our particular choice for normalization of $\eta_j$ as a stochastic vector  does not influence the output of algorithms \labelcref{enu:alg-list-MVA,enu:alg-list-GMVA,enu:alg-list-ESUM,enu:alg-list-EBOTT}.

Development of the network is described by $X(t)=(X_1(t),\dots,X_J(t))$, the joint queue length process
 on state space
\[S(J,K) := \{\nvect =(n_1,\dots,n_j)\in \mathbb N^{\Jset}_0:n_1+\cdots+n_j =K \}.\]
$X(t)=(X_1(t),\dots,X_J(t))=(n_1,\dots,n_j)\in S(J,K)$ reads: at time $t$ there are
$X_j(t)=n_j$ customers present at node $j$, either in service or
waiting. The assumptions put on the system imply that $\X$ is a
ergodic  Markov process on state space $S(J,K)$.
The celebrated theorem of Gordon and Newell  states that the network process $\X$ has
the unique steady state and
limiting distribution $\xi$ on $S(J,K)$
with normalizing  constants $G(J,K)$
\begin{equation} \label{eq:GNSteadystate}
\xi(\nvect)=\xi(n_1,\dots,n_J) = {G(J,K)}^{-1}
\prod_{j=1}^{J} \prod_{\ell=1}^{n_j} \frac{\eta_j}{\mu_j(\ell)}, 
\qquad \nvect\in S(J,K).
\end{equation}
We consider here only  two types of servers, expressed in the rates $\mu_j(n_j)$.
A discussion why this restriction is adequate in open-pit mining and related fields
is given in~\cite[p. 46]{kappas;yegulalp:91}.
\begin{ignorechecktex}
\begin{enumerate}[label=(\arabic*.)]
\item
Single server nodes under FCFS, i.e.\ if $j$ is such a node, then $\mu_j(n_j) = \mu_j$
for all $n_j>0$, $\mu_j(0) =0$.

\item
Infinite server nodes, i.e.\ if $j$ is such a node, then $\mu_j(n_j) = \mu_j\cdot n_j$
for all $n_j>0$, $\mu_j(0) =0$.
\end{enumerate}
\end{ignorechecktex}

A more detailed survey with focus on application in open-pit mining and related areas is given in
\cite{kappas;yegulalp:91}.

{\bfseries Remark.}
From insensitivity theory for symmetric servers~\cite[Chapter 3.3]{kelly:79} it follows that for infinite server nodes we can substitute the exponential-$\mu_j$ service time distribution by any other distribution with the same mean $\mmu_j$
without changing the joint queue length distribution from~\eqref{eq:GNSteadystate}, although $X$ is no longer a Markov process.

\subsection{Short description of general purpose algorithms}\label{sect:ShortGenPurpose}
As indicated in \prettyref{sect:GeneralAlgEvaluation} we give now a brief survey of the
algorithms~\ref{enu:alg-list-MVA}--\ref{enu:alg-list-EBOTT}. For easier access to~\ref{enu:alg-list-ESUM}, ESUM, and~\ref{enu:alg-list-EBOTT}, EBOTT,
we will introduce these two by first describing the companion approximation algorithms for product form networks. Transfer
to the extended versions for non product form networks is then easy in both cases.

\subsubsection{Mean value analysis for product form networks}\label{sect:MVA}
Mean value analysis (MVA) was developed by Reiser and Lavenberg~\cite{reiser;lavenberg:80} for closed exponential Gordon-Newell networks. We give a sketchy description for the case of networks with
exponential single server nodes under FCFS and infinite server nodes with general service time distribution.
The service times at single server $j$ are exponential with mean $\mmu_j$, while
the service times at infinite server $i$ have general distribution function $F_i$ with mean
$\mmu_i$ and  variance $\sigma_i^2$.

The network consists of stations $\Jset :=\{1,2,\dots,J\}$ with $K\geq 1$ identical customers moving around according to a Markovian routing scheme $r:= (r(i,j):i,j \in \Jset)$. $p$ is irreducible with
unique steady state $\eta=(\eta_j:j \in \Jset)$.
The network admits a Markovian modeling by counting the number of customers at each node
 and additionally
recording at infinite server nodes the residual service time for every customer.
We denote by $X=((X_j(t):j\in\Jset):t\geq 0)$ the joint queue length process over all nodes
and add at infinite server node $j$ at time $t$ with $X_j(t)\geq 1$ the supplementary variables
$Y_j(t) = (Y_{j1}(t),Y_{j2}(t),\dots,Y_{j{X_j(t)}}(t))$. For $X_j(t)=0$ we set $Y_j(t) = 0$.
The network has a unique stationary distribution $\pi$.

The MVA computes recursively performance metrics of the network in steady state  with
increasing population sizes  $k=1,2, \dots,K$. We define mean values for population size $k$
with associated stationary distribution $\pi^{(k)}$
\begin{description}[leftmargin=!,labelwidth=\widthof{$\bar W_j(k)$:}]
\item[$\bar X_j(k)$:] expectation of $X_j(k)$, the number  of customers at node $j$ under $\pi^{(k)}$
\item[$\bar W_j(k)$:] expectation of   $W_j(k)$, the sojourn time of a customer at node $j$ under $\pi^{(k)}$
\item[$\lambda_j(k)$:] throughput at node $j$ $\equiv$ expected number of departures from node $j$
per time unit under $\pi^{(k)}$
\item[$\lambda(k)$:] total throughput of the network $\equiv$ expected number  of departures in the network per time unit under $\pi^{(k)}$
\end{description}
Note that $\lambda_j(k) =\eta_j\cdot \lambda(k)$ holds. The MVA recursion is

\begin{algorithm}[H]
\caption{Mean value analysis for product-form networks}\label{alg:mva}
\begin{algorithmic}[1]
\Function{MVA}{}\Comment{Calculate network throughput, average queue size and waiting time}
\State $\bar{X}_j(0) \gets 0$
\For{$k = 1 \ldots K$}
\State $\bar{W}_j(k) \gets
\begin{cases}
\mu_j^{-1} (1+\bar{X}_j(k-1)) & \text{if node } j \text{ is a single server node,} \\
\mu_j^{-1}  & \text{if node } j \text{ is an infinite server node.}
\end{cases} \label{line:mva-W}
$
\State $\lambda(k) \gets \frac{k}{\sum_j\in \Jset\eta_j W_j(k)}$\label{line:mva-lambda}
\State $\bar{X}_j(k) \gets \eta_j\lambda(k)W_j(k)$\label{line:mva-X}
\EndFor
\State \Return \parbox[t]{100mm}{$(\lambda(k) : k \in \{1,\ldots,K\})$,\\
$(\bar{X}_j(k) : k \in \{1,\ldots, K\}, j\in \Jset)$,\\
$(\bar{W}_j(k) : k \in \{1,\ldots,K\}, j\in \Jset)$}
\EndFunction
\end{algorithmic}
\end{algorithm}

\begin{ignorechecktex}
The step in line~\ref{line:mva-W} exploits the Arrival Theorem for Gordon-Newell
networks~\cite{lavenberg;reiser:80}, \cite{sevcik;mitrani:81}, while the steps
in lines~\ref{line:mva-lambda} and~\ref{line:mva-X}
are consequences of Little's  Theorem, see e.g.~\cite{stidham:72, stidham:74}.
\end{ignorechecktex}

\subsubsection{Generalized mean value analysis for non-product form networks}\label{sect:GMVA}
MVA as an exact algorithm breaks down when service times  at the single server nodes of the network
are not exponential, but are generally distributed according to some distribution $F_j$ at node $j$ with mean $\mmu_j$ and variance $\sigma_j^2$.

This network admits a Markovian modeling by counting the number of customers at each node
 and additionally
recording at single server nodes the residual service time for the job in service, if any, and
recording at infinite server nodes the residual service times  of the customers.
We denote by $X=((X_j(t):j\in\Jset):t\geq 0)$ the joint queue length process over all nodes,
add  at single server node $j$ at time $t$ with $X_j(t)\geq 1$ the supplementary variable
$Y_j(t)$. For $X_j(t)=0$ we set $Y_j(t) = 0$; for infinite server  nodes we proceed as in 
\prettyref{sect:MVA}

The network has a unique stationary distribution $\pi$.

While Little's Theorem is with correct interpretation still valid, the crucial point is that the Arrival Theorem does no longer hold. {\bfseries Assuming} that in the present setting the Arrival Theorem holds is a standard approximation in the literature which overcomes at least formally the arising difficulties.
The following Generalized  Mean Value Analysis (GMVA) is described in~\cite[Section 7.1.4.2]{gautam:12}, with a reference to~\cite{bolch;greiner;demeer;trivedi:06}. Gautam referred  to his scheme as
``MVA approximation'' for small population size.
With notation from \prettyref{sect:MVA} we have

\begin{algorithm}[H]
\caption{Generalized mean value analysis for non-product-form networks}\label{alg:gmva}
\begin{algorithmic}[1]
\Function{GMVA}{}\Comment{Calculate network throughput, average queue size and waiting time}
\State $\bar{X}_j(0) \gets 0$
\For{$k = 1 \ldots K$}
\State $\bar{W}_j(k) \gets
\begin{cases}
\mu_j^{-1} \left(\frac{1+C_{F_j}^2}{2}+\bar{X}_j(k-1)\right) & \text{if node } j \text{ is a single server node,} \\
\mu_j^{-1}  & \text{if node } j \text{ is an infinite server node.}
\end{cases} \label{line:gmva-W}
$
\State $\lambda(k) \gets \frac{k}{\sum_{j\in \Jset}\eta_j W_j(k)}$\label{line:mva-lambda-GMV}
\State $\bar{X}_j(k) \gets \eta_j\lambda(k)W_j(k)$\label{line:mva-X-GMV}
\EndFor
\State \Return \parbox[t]{100mm}{$(\lambda (k) : k \in \{1,\ldots,K\})$,\\
$(\bar{X}_j(k) : k \in \{1,\ldots, K\}, j\in \Jset)$,\\
$(\bar{W}_j(k) : k \in \{1,\ldots,K\}, j\in \Jset)$}
\EndFunction
\end{algorithmic}
\end{algorithm}

The step in
\ref{line:gmva-W}
 takes the Arrival Theorem to be true similar as for Gordon-Newell
networks~\cite{lavenberg;reiser:80},~\cite{sevcik;mitrani:81} with an additional correction term.
This term adjusts for the fact that an arriving customer finding a single server
with service time distribution $F_j$ busy, sees in stationary state
a mean residual service time of size
\begin{equation}\label{eq:MRST1}
\frac{1 + C^2_{F_j}}{2},\quad \text{with squared coefficient of variation}~
C^2_{F_j}= \frac{\sigma_j^2}{\mu_j^{-2}}~\text{of the service time.}
\end{equation}
The steps in~\ref{line:mva-lambda-GMV} and~\ref{line:mva-X-GMV}
are consequences of Little's Theorem, see e.g.~\cite{stidham:72, stidham:74}.
\subsubsection{Summation method for product form networks}\label{sect:SUM}
The summation method is an approximation for product form networks, our description
follows~\cite[Section 9.2]{bolch;greiner;demeer;trivedi:06}. Its advantage is that
no recursion in the number of
customers is necessary.
With notation from \prettyref{sect:MVA} (deleting population sizes $(k)$) we define for throughput
$\lambda_i$ at node $i$ with utilization $\rho_i:=\lambda_i/\mu_i$
\begin{equation}\label{eq:fi1}
f_i(\lambda_i) =
    \left\{
      \begin{array}{ll}
        \frac{\rho_i}{1 - \frac{K-1}{K}\rho_i}, & \hbox{if node $i$ is a single server;} \\
        \frac{\lambda_i}{\mu_i}, & \hbox{if node $i$ is an infinite server.}
      \end{array}
    \right\} \approx\bar{X}_i.
\end{equation}
Remark: In~\cite[Section 9.2]{bolch;greiner;demeer;trivedi:06} the $f_i$ are given for exponential
multi-server nodes with $m_i>1$ service channels as well. The respective formula does \emph{not\/} boil
down to the single-server case given above, which is taken
from~\cite{bolch;greiner;demeer;trivedi:06} as well.

The functions $f_i(\lambda_i)$ are non-decreasing in $\lambda_i$ and the $\lambda_i$
are defined in the range
\begin{eqnarray*}
   &0\leq \lambda_i \leq \mu_i,& \quad\text{if node $i$ is a single server;} \\
   &0\leq \lambda_i \leq K\cdot\mu_i,&  \quad\text{if node $i$ is an infinite server}.
\end{eqnarray*}
Because $\lambda_i =\eta_i\cdot\lambda$ we can define
\begin{equation}\label{eq:glambda}
    g(\lambda) := \sum_{i=1}^J f_i(\lambda_i) \approx \sum_{i=1}^J  \bar X_i  = K,
\end{equation}
where the last equality follows from the fixed population constraint. The summation algorithm then is

\begin{algorithm}[H]
\caption{Summation method for product-form networks.}\label{alg:sum}
\begin{algorithmic}[1]
\Function{SUM}{$\epsilon$}\Comment{Calculate network throughput.}
\State $\lambda^{(l)} \gets 0$ \Comment{Chose lower bound for $\lambda$.}
\State $s_i \gets
\begin{cases}
m_i & \text{if node }i \text{ has } m_i<\infty \text{ servers} \\
K & \text{if node } i \text{ is an infinite server}
\end{cases}$
\State $\lambda^{(u)} \gets \min_i \left\{\frac{\mu_i s_i}{\eta_i}\right\}$
 \Comment{Chose upper bound for $\lambda$.}
\Repeat \Comment{Determine $\lambda$ by bisection algorithm}
\State $\lambda \gets \frac{\lambda^{(l)} + \lambda^{(u)}}{2}$
\State $\lambda_i \gets \lambda \cdot \eta_i$
\State $g(\lambda) \gets \sum_{i=1}^K f_i(\lambda_i)$ where the $f_i$ are defined in~\eqref{eq:fi1}.
\If{$g(\lambda)>K+\epsilon$}
\State $\lambda^{(u)} \gets \lambda$
\Else
\State $\lambda^{(l)} \gets \lambda$ \Comment{Effectively if $g<K-\epsilon$}
\EndIf
 \Until{$|g(\lambda)-K|\leq \epsilon$}
\State \Return \parbox[t]{100mm}{$(\lambda_j : j \in \Jset)$}

\EndFunction
\end{algorithmic}
\end{algorithm}

\begin{remark}
\label{rem:SUM-loc}If $\lambda$ is found then in the computation step we set approximately $\lambda_i=\eta_i\cdot\lambda$  according to the definition of local throughput and
$\bar X_i =f_i(\lambda_i)$ by using the approximation in~\eqref{eq:fi1} again, and consequently
by Little's Theorem which applies here, $\bar W_i =f_i(\lambda_i)/\lambda_i$.
\end{remark}
\subsubsection{Extended summation method for non-product form networks}\label{sect:ESUM}
Extending the summation method to networks with infinite servers and single servers under FCFS
with non-exponential service time is an easy task now: While for the infinite server $i$ we use again
$f_i(\lambda_i) =\rho_i$ we  replace for single server $i$ the $f_i(\lambda_i)(\approx \bar X_i)$.
In~\cite[(10.78), p. 503 and (10.88), p. 505]{bolch;greiner;demeer;trivedi:06} it is suggested to use
with $a_i:= (1+C^2_{F_i})/2$
\begin{equation}\label{eq:fi2}
f_i(\lambda_i) =
    \left\{
      \begin{array}{ll}
        \rho_i +\frac{\rho_i^2\cdot a_i}{1 - \frac{K-1- a_i}{K-1}\rho_i}, & \hbox{if node $i$ is a single server;} \\
        \rho_i, & \hbox{if node $i$ is an infinite server.}
      \end{array}
    \right\} \approx\bar{X}_i.
\end{equation}
With this substitute then the algorithm in \prettyref{sect:SUM} is run,
and \prettyref{rem:SUM-loc} applies here as well.
\begin{remark}\label{rem:Kompatible-1}
The formula for $f_i$ in case of single servers with non-exponential service times (in~\eqref{eq:fi2})
is in case of exponential service times \emph{not\/} the first formula in~\eqref{eq:fi1}. Because we apply the
extended summation method only for single server nodes with non-exponential service times we follow the recommendation in~\cite[(10.88), p. 505]{bolch;greiner;demeer;trivedi:06}.
\end{remark}
\subsubsection{Bottleneck approximation method for product form networks}\label{sect:BOTT}
The bottleneck approximation method is a computational  approximation for product form networks,
our description follows~\cite[Section 9.3]{bolch;greiner;demeer;trivedi:06}.
Its advantage compared to the exact MVA is that no recursion in the number of customers is necessary.

We assume that there exists exactly one bottleneck node.
With notation from \prettyref{sect:MVA} (deleting population sizes $(k)$) we define for throughput
$\lambda_i$ at node $i$ with $\rho_i:=\lambda_i/\mu_i$
\begin{equation}\label{eq:fi3}
f_i(\lambda_i) =
    \left\{
      \begin{array}{ll}
        \frac{\rho_i}{1 - \frac{K-1}{K}\rho_i}, & \hbox{if node $i$ is a single server;} \\
        \frac{\lambda_i}{\mu_i}, & \hbox{if node $i$ is an infinite server.}
      \end{array}
    \right\} \approx\bar{X}_i.
\end{equation}
We define the inverse functions of the strictly increasing $f_i(\cdot)$ by $h_i(\cdot)$.
Because the bottleneck in our problem is in every case a single server or an infinite server, we fix it only for these cases~\cite[p. 449]{bolch;greiner;demeer;trivedi:06}:
\begin{equation}\label{eq:hi3}
h_i(\bar X_i) =
 \left\{
      \begin{array}{ll}
      \frac{\bar X_i}{1 + \frac{K-1}{K}\bar{X}_i}   & \hbox{if bottleneck node $i$ is a single server;} \\
       \frac{\bar X_i}{K}, & \hbox{if  bottleneck  node $i$ is an infinite server.}
      \end{array}
    \right\} \approx\rho_i.
\end{equation}
The pseudocode for the bottleneck approximation method is listed in \Vref{alg:bottleneck}.
\begin{algorithm}[h]
\caption{Bottleneck approximation for product-form networks.}\label{alg:bottleneck}
\begin{algorithmic}[1]
\Function{BOTT}{$\epsilon$}\Comment{Calculate network throughput and average queue size.}
\State $s_i \gets
\begin{cases}
1 & \text{if node }i \text{ is a single server} \\
K & \text{if node } i \text{ is an infinite server}
\end{cases}$
\State $\lambda \gets \min_i \left\{\frac{\mu_i s_i}{\eta_i}\right\}$
 \Comment{Chose initial $\lambda$.}
\State $\text{bott} \gets \argmin_i \left\{\frac{\mu_i s_i}{\eta_i}\right\}$\label{line:BOTT-determine-bottleneck}
\Comment{Chose initial bottleneck index.}
\Repeat
\State $\lambda_i \gets \lambda \cdot \eta_i$
\State $\rho_i \gets \lambda_i/(\mu_i\cdot s_i)$
\State
$\bar{X}_i \gets f_i(\lambda_i)$
\State $g(\lambda) \gets \sum_{i=1}^K f_i(\lambda_i)$ where the $f_i$ are defined in~\eqref{eq:fi3}.
\State $\bar{X}_{\text{bott}}\gets \bar{X}_{\text{bott}} \cdot \frac{K}{g(\lambda)}$
\State
$\rho_{\text{bott}}\gets h_{\text{bott}}(\bar{X}_{\text{bott}})$
\State
$\lambda \gets \frac{\rho_{\text{bott}}\cdot
s_{\text{bott}}\cdot
 \mu_{\text{bott}}}{\eta_{\text{bott}}}$
 \Until{$\left| \frac{K}{g(\lambda)} - 1\right| \leq \epsilon$}
 \State $\bar{X}_i \gets \bar{X}_i \cdot \left| \frac{K}{g(\lambda)} \right| $
\State \Return \parbox[t]{100mm}{$(\lambda_j : j \in \Jset)$,\\
$(\bar{X}_j: j\in \Jset)$}
\EndFunction
\end{algorithmic}
\end{algorithm}
\begin{remark}
By a coincidence, our system with \textbf{four} trucks  and mean service times from \Vref{tab:ServiceTimes} has two bottlenecks:
on the nodes $1$ and $4$:
$\frac{\mu_1 s_1}{\eta_1}=
\frac{1/1.5\cdot 1}{1/4} =
\frac{1/6.0\cdot 4}{1/4}=
\frac{\mu_4 s_4}{\eta_4}
$
Therefore  line~\ref{line:BOTT-determine-bottleneck} from the original
BOTT or EBOTT algorithm will not work properly. In our implementation of BOTT and EBOTT, if  line~\ref{line:BOTT-determine-bottleneck} returns multiple indexes, we just take the smallest one. This is adequate in our problem setting because node 4 is an infinite server.

In general this modification of the bottleneck approximation is not good, because the
systems with multiple bottlenecks can be very different from the system with a single bottleneck. In our case the modification seems to return consistent results.

\end{remark}

\subsubsection{Extended bottleneck approximation  method for non-product form networks}\label{sect:EBOTT}
Extending the bottleneck approximation  method is similar to extending the summation method to networks with infinite servers and single servers under FCFS
with non-exponential service time.
We assume again that there exists exactly one bottleneck node.

 While for the infinite server $i$ we use again
$f_i(\lambda_i) =\rho_i$ we  replace for single server $i$ the $f_i(\lambda_i)$.
In~\cite[(10.88), p. 505]{bolch;greiner;demeer;trivedi:06} it is suggested to use
with $a_i:= (1+C^2_{F_i})/2$  (note that \prettyref{rem:Kompatible-1} applies here as well)
\begin{equation*}
f_i(\lambda_i) =
    \left\{
      \begin{array}{ll}
        \rho_i +\frac{\rho_i^2\cdot a_i}{1 - \frac{K-1- a_i}{K-1}\rho_i}, & \hbox{if node $i$ is a single server;} \\
        \frac{\lambda_i}{\mu_i}, & \hbox{if node $i$ is an infinite server.}
      \end{array}
    \right\} \approx\bar{X}_i.
\end{equation*}
The problem is now to invert for the single server node the $f_i$ which in our case indeed is
needed because the bottleneck  in any case is such node.
With this substitute $h_i$ the algorithm in \prettyref{sect:BOTT} is run.

For $\cdot/G/1/\infty$ nodes under FCFS in~\cite[(10.90), p. 505]{bolch;greiner;demeer;trivedi:06} it is suggested to use (recall~\eqref{eq:MRST1}) with  stationary mean residual service time
\begin{equation*}
a_i := \frac{1 + C^2_{F_i}}{2},\quad \text{with squared coefficient of variation}~
C^2_{F_i}= \frac{\sigma_i^2}{\mu_i^{-2}}~\text{of the service time}
\end{equation*}
and with
\begin{equation*}
    b_i:= \frac{K-1-a_i}{K-1}
\end{equation*}
the approximated inversion formula
\begin{equation*}
h_i(\bar X_i) =
 \left\{
      \begin{array}{ll}
\frac{-(1+b_i\cdot\bar{X}_i) + \sqrt{{(1+b_i\cdot\bar{X}_i)}^2 + 4\cdot\bar{X}_i(a_i-b_i)}}{2(a_i-b_i)}  .
      & \parbox{3.5cm}{if bottleneck node $i$ is a single server;} \\
      \\
       \frac{\bar{X}_i}{K}, & \parbox{3.5cm}{if bottleneck  node $i$ is an infinite server.}
      \end{array}
    \right\} \approx\rho_i.
\end{equation*}
\subsubsection{Deterministic flow approximation}
\label{sect:FLOW} Flow approximations of stochastic systems
assume that all interarrival and service times in the cycle are deterministic. So, the lengths of the
service times in the cycle are $\mmu_j, j=1,2,3,4$.
We can assume  $\mmu_1>\mmu_3$ for our problem setting. 

Then after an initial transient phase there will be no more queueing at node $3$ and the backcycle times
$T$ (see \prettyref{sect:StoyanApproximation}) are deterministic  $T=\mmu_2 + \mmu_3 + \mmu_4$.
Additionally the backcycles of the customers do not interfere, which results in the observation that we can substitute the node sequence $(2,3,4)$ by a single infinite server node (or even by a single
$\cdot/D/K$ queue when evaluating e.g.\ cycle times, system throughput, idling probability of node $1$.
Saying the other way round, the approximative reduction of the 4-stage cycle to a 2-stage cycle by Stoyan \& Stoyan (see \prettyref{sect:SpecialStructure}, page~\pageref{page:4to2-stage}) is in this case (with respect to the mentioned
performance metrics) exact.

\begin{ignorechecktex}
The resulting cycle consisting of a $\cdot/D/\infty$ and a $\cdot/D/K$  node was used by Boyse and
Warn~\cite{boyse;warn:75} to propose ``a straightforward model for computer performance prediction''.They
investigate an interactive terminal-computing system where the 2-stage deterministic cycle models the paging mechanism for updating the memory. Translating the notation of~\cite{boyse;warn:75} into ours,
their relevant formula~\cite[(2-1) on p.~78]{boyse;warn:75} reads
\end{ignorechecktex}
\begin{equation}\label{eq:BW2-1}
    1- \pi^{(K)}_1(0) =
    \begin{cases}
        \frac{K\mmu_1}{\sum_{i=1}^4 \mmu_i}, & \text{if } (K-1)\mmu_1\leq \sum_{i=2}^4 \mmu_i;\\
        1, & \text{if }(K-1)\mmu_1 > \sum_{i=2}^4 \mmu_i.
    \end{cases}
\end{equation}

Note that we can write the dichotomy in~\eqref{eq:BW2-1} as 
$$\sum_{i=2}^4 \mmu_i -(K-1)\mmu_1 ~~\binom{\leq}{>}~~ 0.$$

The left side of this expression is exactly the right side of~\eqref{eq:W-1} which leads to the two cases in~\eqref{eq:I-1}. Similarly to~\eqref{eq:I-1},
a more compact formula can be obtained by introducing the asymptotic waiting time  $\bar V_1$ of the customers at node $1$
\begin{equation}
    \bar{V}_{1}  =\max\left(0,K\mmu_{1}-\sum_{i=1}^{4}\mmu_{i}\right),\label{eq:det-V-bar-10}
\end{equation}
and to  write the complementary probability of~\eqref{eq:BW2-1} as
\begin{equation}
\pi_{1}^{(K)}(0)=1-\frac{K\mmu_{1}}{\sum_{i=1}^{4}\mmu_{i}+\bar{V}_{1}}.\label{eq:pi-1-0-deterministic-10}
\end{equation}

\begin{algorithm}
\begin{algorithmic}
\Function{FLOW}{}\Comment{
\parbox[t]{70mm}{Calculate waiting time, idling probability  of node $1$ in a  deterministic system with $\mmu_1 > \mmu_3$}}
\State
$\bar{V}_{1} \gets \max\left(0,
\mmu_{1}K-\sum_{i=1}^{4}\mmu_{i}\right)$
\State
$\pi_{1}^{(K)}(0)\gets 1-\frac{K\mmu_{1}}{\sum_{i=1}^{4}\mmu_{i}+\bar{V}_{1}} $
\State
$\lambda_1 \gets (1-\pi_{1}^{(K)}(0))\mmu_{1}$
\State \Return \parbox[t]{100mm}{$\pi^{(K)}_1(0), \lambda_1 , \bar{V}_{1}$
}
\EndFunction
\end{algorithmic}
\end{algorithm}

\section{Omitted proofs and complements}\label{sect:ProofsComplements}
\subsection{Omitted proofs}\label{sect:Proofs}
\begin{proof}[Proof of \prettyref{prop:pi0}]
\ref{enu:prop-pi-0-local-lambda-is}  follows from the cyclic structure and the fact that in equilibrium all four nodes must have the same throughput. To prove
\ref{enu:prop-pi-0-pi-0-is}
we will apply Little's formula twice and elaborate on this with the pathwise proof of this formula by
Stidham~\cite{stidham:72,stidham:74}. Existence and uniqueness of the limiting and stationary distribution will
then transform the pathwise formulas into expectations and probabilities.
The general formula $L=\lambda\cdot W$ is interpreted for the relevant quantities of node $1$ firstly  as
\begin{itemize}
\item
{Mean queue length = Arrival rate $\times$ Mean sojourn time} \mbox{\hspace{0.5cm}}\\
and secondly as
\item {Mean number of waiting customers = Arrival rate $\times$ Mean waiting time}.\\
Subtracting we obtain
\item {Mean number of  customers in service =
Arrival rate $\times$ Mean service time},\\ which is $ 1 - \pi_1^{(K)}(0) = \lambda_1(K)\cdot  {\mmu_1}$.
\end{itemize}
\end{proof}

\emph{Proof\/} of \prettyref{prop:ModifiedService} will be given in a sequence of statements.

We have to analyze a random variable $S_1^{(m)}:=S_1+1_{\{X<S_1\}}Y$ with $S_1\sim\mathcal{N}(\mu,\sigma^{2})$,
$X\sim \expdist(\alpha)$ and $Y\sim \expdist(\beta)$. $S_1$, $X$ and $Y$
are  independent. $S_1$ describes a typical service
time without breakdown, $X$ is generated by the breakdown process and $Y$
describes repair time. $S_1^{(m)}$ describes a typical modified service time considering breakdowns
and repair. 

Recall that $\Phi$ is distribution function of the standard normal distribution, and
 $\phi$ the density function of $\Phi$. 
 
We assume that $P(S_1<0)$ is negligible.
$P(X<S_1)$ is the break down probability and $P(X\geq S_1)$
the probability that a  service will be completed without failures.

\begin{prop}\label{prop:break-down-proabability}
For  independent $S_1\sim N(\mu,\sigma)$, $X\sim \expdist(\alpha)$  holds
\[
P(X<S_1)=\Phi\left(\frac{\mu}{\sigma}\right)-\exp\left(\frac{\alpha^{2}\sigma^{2}}{2}-\alpha\mu\right)\Phi\left(\frac{\mu}{\sigma}-\alpha\sigma\right).
\]
\end{prop}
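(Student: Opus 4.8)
The plan is to condition on the value of the normal variable $S_1$ and integrate out the exponential $X$. Since $X\sim\expdist(\alpha)$ is nonnegative almost surely, the event $\{X<S_1\}$ can only occur when $S_1>0$, and for $s>0$ one has $P(X<s)=1-e^{-\alpha s}$. Writing $f(s)=\frac{1}{\sigma}\varphi\!\left(\frac{s-\mu}{\sigma}\right)$ for the density of $S_1$ and using independence, I would start from
\[
P(X<S_1)=\int_0^\infty\bigl(1-e^{-\alpha s}\bigr)f(s)\,ds
=\int_0^\infty f(s)\,ds-\int_0^\infty e^{-\alpha s}f(s)\,ds.
\]

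The first integral is immediate: $\int_0^\infty f(s)\,ds=P(S_1>0)=1-\Phi(-\mu/\sigma)=\Phi(\mu/\sigma)$, which produces the leading term of the claimed formula. The work is in the second integral, and the key device is completing the square in the combined exponent $-\alpha s-\frac{(s-\mu)^2}{2\sigma^2}$. Collecting terms rewrites it as $-\frac{(s-(\mu-\alpha\sigma^2))^2}{2\sigma^2}+\frac{\alpha^2\sigma^2}{2}-\alpha\mu$, so that the Gaussian re-centers at $\mu-\alpha\sigma^2$ while the constant factor $\exp\!\left(\frac{\alpha^2\sigma^2}{2}-\alpha\mu\right)$ splits off. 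This is precisely the exponential-tilting (moment-generating-function) identity for the normal law.

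What remains is a normal tail integral over $(0,\infty)$ of an $N(\mu-\alpha\sigma^2,\sigma^2)$ density, which equals $\Phi\!\left(\frac{\mu-\alpha\sigma^2}{\sigma}\right)=\Phi\!\left(\frac{\mu}{\sigma}-\alpha\sigma\right)$. Substituting both pieces back gives exactly the stated expression. There is no genuine analytic obstacle; the only thing to watch is bookkeeping — keeping the lower limit at $0$ rather than $-\infty$ (legitimate precisely because $X\ge 0$ forces $S_1>0$, which incidentally shows the identity does not even require the negligibility of $P(S_1<0)$), and tracking the constants through the completion of the square so that the tilting factor and the shifted argument of $\Phi$ emerge correctly.
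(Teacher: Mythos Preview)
Your argument is correct and reaches the same core computation as the paper --- the integral $\int_0^\infty e^{-\alpha s}f_{S_1}(s)\,ds$ handled by completing the square --- but you arrive there by a slightly different route. The paper conditions on the exponential variable $X$ and computes the complementary probability $P(S_1\le X)=\int_0^\infty F_{S_1}(t)\,\alpha e^{-\alpha t}\,dt$, then integrates by parts to produce the boundary term $F_{S_1}(0)$ plus the Gaussian--exponential integral. You instead condition on $S_1$ and split $P(X<S_1)$ directly into $\int_0^\infty f(s)\,ds-\int_0^\infty e^{-\alpha s}f(s)\,ds$, which bypasses the integration by parts and the complement. Your route is marginally more economical; the paper's has the minor advantage that conditioning on $X$ keeps everything on $[0,\infty)$ from the outset without needing the observation that $\{X<S_1\}\subset\{S_1>0\}$. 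Your remark that the identity holds exactly, without any assumption that $P(S_1<0)$ be negligible, is a nice clarification.
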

\begin{proof}
\begin{align}
P(S_1\leq X) & =\int P(\underbrace{S_1\leq t}_{\mathclap{ \text{st.\ ind.\ from $X$}}}|X=t)dF_{X}(t)\nonumber \\
 & =\int P(S_1\leq t)dF_{X}(t)=\int P(S_1\leq t)f_{X}(t)dt\nonumber \\
 & =\int_{0}^{\infty}F_{S_1}(t)\alpha\exp(-\alpha t)dt \nonumber\\
 & =-F_{S_1}(t)\exp(-\alpha t)dt|_{0}^{\infty}+\int_{0}^{\infty}f_{S_1}\exp(-\alpha t)dt\nonumber \\
 & =F_{S_1}(0)+\frac{1}{\sigma\sqrt{2\pi}}\int_{0}^{\infty}\exp\left(-\frac{1}{2}{\left(\frac{t-\mu}{\sigma}\right)}^2-\alpha t\right)dt.\label{eq:VP-B-less-X-proof-1}
\end{align}
In order to calculate the last integral we use
\begin{align}
-\frac{1}{2}{\left(\frac{t-\mu}{\sigma}\right)}^{2}-\alpha t & =-\frac{1}{2}\left({\left(\frac{t-\mu}{\sigma}\right)}^{2}+\underbrace{2\alpha\sigma\frac{(t-\mu)}{\sigma}+2\alpha\mu}_{=2\alpha t}+\alpha^{2}\sigma^{2}-\alpha^{2}\sigma^{2}\right)\nonumber \\
 & =-\frac{1}{2}{\left(\frac{t-\mu}{\sigma}+\alpha\sigma\right)}^{2}-\alpha\mu+\frac{\alpha^{2}\sigma^{2}}{2}\nonumber \\
 & =-\frac{1}{2}{\left(\frac{t-(\mu-\alpha{\sigma}^{2})}{\sigma}\right)}^{2}+\frac{\alpha^{2}\sigma^{2}}{2}-\alpha\mu\label{eq:VP-exp-mul-exp-transformation}
\end{align}
and transform the last integral of~\eqref{eq:VP-B-less-X-proof-1} into
\begin{align*}
 & \mathrel{\hphantom{=}}\frac{1}{\sigma\sqrt{2\pi}}\int_{0}^{\infty}\exp\left(-\frac{1}{2}{\left(\frac{t-\mu}{\sigma}\right)}^{2}-\alpha t\right)\\
 & =\exp\left(\frac{\alpha^{2}\sigma^{2}}{2}-\alpha\mu\right)\frac{1}{\sigma\sqrt{2\pi}}\int_{0}^{\infty}\exp\left(-\frac{1}{2}{\left(\frac{t-(\mu-\alpha\sigma^{2})}{\sigma}\right)}^{2}\right)dt.
\end{align*} $\int_{0}^{\infty}\exp\left(-\frac{1}{2}{\left(\frac{t-(\mu-\alpha\sigma^{2})}{\sigma}\right)}^{2}\right)dt$
is the probability $P(A\geq0)$ of a normal random variable $A$ with
mean $(\mu-\alpha\sigma^{2})$ and variance $\sigma$. Therefore it
holds
\[
P(S_1\leq X)=\underbrace{1-\Phi\left(\frac{-\mu}{\sigma}\right)}_{=\Phi(\mu/\sigma)}+\underbrace{\left(1-\Phi\left(\frac{-\mu}{\sigma}+\alpha\sigma\right)\right)}_{=\Phi(\mu/\sigma-\alpha\sigma)}\exp\left(\frac{\alpha^{2}\sigma^{2}}{2}-\alpha\mu\right).
\]
\end{proof}

In the following \prettyref{lem:VP-E-B-mu-1-X-less-B}
we will use
\begin{equation}
\label{eq:positive-normal}
\frac{1}{\sigma\sqrt{2\pi}}\int_{0}^{\infty}t\exp\left(-\frac{1}{2}{\left(\frac{t-y}{\sigma}\right)}^{2}\right)dt=\Phi\left(\frac{y}{\sigma}\right)y+\varphi\left(\frac{y}{\sigma}\right)\sigma.
\end{equation}
from {\cite[(3.5.4)]{stoyan;stoyan:71}}.

\begin{lemma}
\label{lem:VP-E-B-mu-1-X-less-B}For  independent $S_1\sim N(\mu,\sigma)$, $X\sim \expdist(\alpha)$ holds
with $y:=(\mu-\alpha\sigma^{2})$
\[
E(S_11_{\{X<S_1\}})=\Phi\left(\frac{\mu}{\sigma}\right)y+\varphi\left(\frac{\mu}{\sigma}\right)\sigma-\exp\left(\frac{\alpha^{2}\sigma^{2}}{2}-\alpha\mu\right)\left(\Phi\left(\frac{y}{\sigma}\right)y+\varphi\left(\frac{y}{\sigma}\right)\sigma\right).
\]
\end{lemma}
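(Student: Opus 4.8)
The plan is to condition on $S_1$ and reduce everything to the two integral identities already at hand. By the tower property, conditioning on $S_1=t$ and using that $X$ is independent of $S_1$,
\[
E\bigl(S_1 \mathbf{1}_{\{X<S_1\}}\bigr)=\int_{-\infty}^{\infty} t\,P(X<t)\,f_{S_1}(t)\,dt,
\]
where $f_{S_1}$ is the $N(\mu,\sigma^2)$ density. Since $X\sim\expdist(\alpha)$ gives $P(X<t)=1-e^{-\alpha t}$ for $t>0$ and $P(X<t)=0$ for $t\le 0$, and since $P(S_1<0)$ is assumed negligible, I would restrict the integral to $[0,\infty)$ and split it as
\[
E\bigl(S_1 \mathbf{1}_{\{X<S_1\}}\bigr)=\underbrace{\frac{1}{\sigma\sqrt{2\pi}}\int_0^{\infty} t\,\exp\!\left(-\tfrac12\bigl(\tfrac{t-\mu}{\sigma}\bigr)^2\right)dt}_{=:I_1}-\underbrace{\frac{1}{\sigma\sqrt{2\pi}}\int_0^{\infty} t\,\exp\!\left(-\tfrac12\bigl(\tfrac{t-\mu}{\sigma}\bigr)^2-\alpha t\right)dt}_{=:I_2}.
\]
The first integral $I_1$ is exactly the left-hand side of \eqref{eq:positive-normal} evaluated at mean $\mu$, so it reduces immediately to $\Phi(\mu/\sigma)\,\mu+\varphi(\mu/\sigma)\,\sigma$, supplying the first two terms.

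The second integral $I_2$ is the crux and the step I expect to cost the most work. Here I would reuse the completing-the-square identity \eqref{eq:VP-exp-mul-exp-transformation} from the proof of \prettyref{prop:break-down-proabability}: it rewrites the exponent $-\tfrac12((t-\mu)/\sigma)^2-\alpha t$ as $-\tfrac12((t-y)/\sigma)^2+\tfrac{\alpha^2\sigma^2}{2}-\alpha\mu$ with $y=\mu-\alpha\sigma^2$. Pulling the constant factor $\exp(\tfrac{\alpha^2\sigma^2}{2}-\alpha\mu)$ out of the integral leaves precisely the left-hand side of \eqref{eq:positive-normal}, now at the shifted mean $y$, so
\[
I_2=\exp\!\left(\tfrac{\alpha^2\sigma^2}{2}-\alpha\mu\right)\left(\Phi\!\left(\tfrac{y}{\sigma}\right)y+\varphi\!\left(\tfrac{y}{\sigma}\right)\sigma\right).
\]
Subtracting $I_2$ from $I_1$ then assembles the three terms and completes the argument.

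In summary, the proof is a bookkeeping exercise built on two applications of the first-moment identity \eqref{eq:positive-normal}, linked by the same Gaussian-times-exponential manipulation \eqref{eq:VP-exp-mul-exp-transformation} that drives \prettyref{prop:break-down-proabability}. The structural difference from that proposition is only the extra factor $t$ in the integrand: there one integrates the density itself (a tail probability), whereas here one integrates $t$ against it (a first moment), which is exactly why \eqref{eq:positive-normal} rather than a bare Gaussian integral is the appropriate tool. The only genuine approximation, as in \prettyref{prop:break-down-proabability}, is discarding the contribution from $t<0$, justified by the standing assumption that $P(S_1<0)$ is negligible; the main obstacle is carrying the completing-the-square step for $I_2$ cleanly so that the shifted mean $y=\mu-\alpha\sigma^2$ emerges correctly.
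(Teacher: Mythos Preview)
Your argument is exactly the paper's: condition on $S_1$, split via $P(X<t)=(1-e^{-\alpha t})\,1_{\{t\ge 0\}}$ into two half-line integrals, evaluate the first directly by \eqref{eq:positive-normal}, and treat the second by the completing-the-square identity \eqref{eq:VP-exp-mul-exp-transformation} followed by \eqref{eq:positive-normal} at the shifted mean $y=\mu-\alpha\sigma^{2}$. One remark: your $I_1=\Phi(\mu/\sigma)\,\mu+\varphi(\mu/\sigma)\,\sigma$ is what \eqref{eq:positive-normal} actually yields, whereas the lemma as stated has coefficient $y$ rather than $\mu$ in the first term---this appears to be a typo in the paper (propagated to \prettyref{prop:ModifiedService}), and your computation is the correct one.
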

\begin{proof}
\begin{align*}
E(S_11_{\{X<S_1\}}) &=\int tP(\underbrace{X<t}_{\mathclap{\text{st.\ ind.\ from $S_1$}}}|S_1=t){dP}_{S_1}(t)\\
 & =\frac{1}{\sigma\sqrt{2\pi}}\int t(1-\exp(-\alpha t))1_{\{t\geq0\}}\exp\left(-\frac{1}{2}{\left(\frac{t-\mu}{\sigma}\right)}^{2}\right)dt\\
 & =\frac{1}{\sigma\sqrt{2\pi}}\int_{0}^{\infty}t\exp\left(-\frac{1}{2}{\left(\frac{t-\mu}{\sigma}\right)}^{2}\right)dt\\
 & -\frac{1}{\sigma\sqrt{2\pi}}\int_{0}^{\infty}t\exp(-\alpha t)\exp\left(-\frac{1}{2}{\left(\frac{t-\mu}{\sigma}\right)}^{2}\right)dt
\end{align*}

Using the~\eqref{eq:VP-exp-mul-exp-transformation} we get
\begin{align*}
 & \mathrel{\hphantom{=}}\frac{1}{\sigma\sqrt{2\pi}}\int_{0}^{\infty}t\exp(-\alpha t)\exp\left(-\frac{1}{2}{\left(\frac{t-\mu}{\sigma}\right)}^{2}\right)\\
 & =\exp\left(\frac{\alpha^{2}\sigma^{2}}{2}-\alpha\mu\right)\frac{1}{\sigma\sqrt{2\pi}}\int_{0}^{\infty}t\exp\left(-\frac{1}{2}{\left(\frac{t-(\mu-\alpha\sigma^{2})}{\sigma}\right)}^{2}\right)dt.
\end{align*}

From~\eqref{eq:positive-normal} it follows with
$y:=(\mu-\alpha\sigma^{2})$
\begin{align*}
E(S_11_{\{X<S_1\}}) & =\Phi\left(\frac{\mu}{\sigma}\right)y+\varphi\left(\frac{\mu}{\sigma}\right)\sigma\\
 & -\exp\left(\frac{\alpha^{2}\sigma^{2}}{2}-\alpha\mu\right)
\left(\Phi\left(\frac{y}{\sigma}\right)y+\varphi\left(\frac{y}{\sigma}\right)\sigma\right).
\end{align*}
\end{proof}

\begin{prop}
\label{prop:VP-E-Z}For the modified service time $S_1^{(m)}:=E(S_1+1_{\{X<S_1\}}Y)$ with
independent $S_1\sim N(\mu,\sigma)$, $X\sim \expdist(\alpha)$
and $Y\sim \expdist(\beta)$ holds
\begin{align}
 E(S_1^{(m)})  &=  \mu+\frac{1}{\beta}P(X<S_1),\quad\text{and}\label{eq:ModifiedE1}\\
 Var(S_1^{(m)})  &=  \sigma^{2}+2E(1_{\{X<S_1\}}S_1)\frac{2}{\beta^{2}}+
P(X<S_1)\left(\frac{2}{\beta^{2}}-\frac{2\mu}{\beta}-\frac{1}{\beta^{2}}P(X<S_1)\right).
\label{eq:ModifiedVar1}
\end{align}
\end{prop}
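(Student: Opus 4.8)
The plan is to obtain both moments by direct expectation computations, exploiting that $Y$ is independent of the pair $(X,S_1)$ while the breakdown indicator $1_{\{X<S_1\}}$ is a function of $(X,S_1)$ alone. Throughout I write $A:=1_{\{X<S_1\}}$, so that $S_1^{(m)}=S_1+AY$ with $A^2=A$, and I recall the exponential moments $EY=1/\beta$ and $EY^2=2/\beta^2$.

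For the mean \eqref{eq:ModifiedE1}, linearity gives $ES_1^{(m)}=ES_1+E(AY)$. Since $Y$ is independent of $(X,S_1)$, the factor $Y$ splits off: $E(AY)=E(A)\,E(Y)=P(X<S_1)/\beta$. Together with $ES_1=\mu$ this is exactly the claimed identity, and the value of $P(X<S_1)$ itself is supplied by \cref{prop:break-down-proabability}.

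For the variance I would compute $E\big((S_1^{(m)})^2\big)$ and subtract the square of the mean just found. Expanding $(S_1+AY)^2=S_1^2+2AS_1Y+A^2Y^2=S_1^2+2AS_1Y+AY^2$ and again peeling off $Y$ from each mixed term via independence yields
\begin{align*}
E\big((S_1^{(m)})^2\big) &= E(S_1^2)+2\,E(AS_1)\,E(Y)+E(A)\,E(Y^2)\\
&= (\sigma^2+\mu^2)+\frac{2}{\beta}E(1_{\{X<S_1\}}S_1)+\frac{2}{\beta^2}P(X<S_1).
\end{align*}
Here the middle term is precisely the quantity evaluated in \cref{lem:VP-E-B-mu-1-X-less-B}; crucially it cannot be reduced to $\mu\,P(X<S_1)$, because $S_1$ and the indicator $A$ are positively correlated (a longer load time makes a breakdown more likely). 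Subtracting $(ES_1^{(m)})^2=\mu^2+\tfrac{2\mu}{\beta}P(X<S_1)+\tfrac{1}{\beta^2}P(X<S_1)^2$ and collecting the $P(X<S_1)$ terms then produces \eqref{eq:ModifiedVar1}.

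The computation is entirely mechanical once the independence structure is fixed, so there is no real obstacle; the only point demanding care is the bookkeeping of the three-way dependence between $S_1$, $X$ and $Y$. The one genuinely non-trivial input is the closed form for $E(1_{\{X<S_1\}}S_1)$, but that has already been established in \cref{lem:VP-E-B-mu-1-X-less-B}, so here it enters as a black box. I would therefore present the variance expansion as above and simply cite the two preceding results to render the final expression fully explicit.
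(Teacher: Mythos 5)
Your proposal is correct and follows essentially the same route as the paper: compute $E\bigl((S_1^{(m)})^2\bigr)$ by expanding $(S_1+AY)^2$ with $A^2=A$, peel off $Y$ by independence, and subtract the squared mean, citing Proposition~\ref{prop:break-down-proabability} and Lemma~\ref{lem:VP-E-B-mu-1-X-less-B} for the explicit constants. One point deserves attention, though: your (correct) expansion gives the mixed term as $2E(1_{\{X<S_1\}}S_1)E(Y)=\frac{2}{\beta}E(1_{\{X<S_1\}}S_1)$, whereas the displayed formula \eqref{eq:ModifiedVar1} carries the coefficient $2\cdot\frac{2}{\beta^{2}}=\frac{4}{\beta^{2}}$ on that term; these do not agree, so your closing claim that the subtraction ``produces \eqref{eq:ModifiedVar1}'' is not literally true. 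The discrepancy originates in the paper itself (its second-moment line has the factor $\frac{1}{\beta}$, which silently becomes $\frac{2}{\beta^{2}}$ in the next line of its proof and in the statement), so your bookkeeping actually exposes a typo rather than containing an error — but you should state the corrected coefficient explicitly rather than asserting agreement with the printed formula.
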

\begin{proof}
\eqref{eq:ModifiedE1} follows directly from
\begin{equation*}
E(S_1^{(m)})=E(S_1+1_{\{X<S_1\}}Y)=E(S_1)+E(1_{\{X<S_1\}})E(Y).
\end{equation*}

We have furthermore
\begin{align*}
E({(S_1^{(m)})}^{2}) & =E(S_1^{2}+2(1_{\{X<S_1\}}YS_1)+1_{\{X<S_1\}}^{2}Y^{2})\\
 & =E(S_1^{2})+2E(1_{\{X<S_1\}}S_1)E(Y)+E(1_{\{X<S_1\}})E(Y^{2})\\
 & =\sigma^{2}+\mu^{2}+2E(1_{\{X<S_1\}}S_1)\frac{1}{\beta}+P(X<S_1)\frac{2}{\beta^{2}},
\end{align*}
and therefore~\eqref{eq:ModifiedVar1} follows from
\begin{align*}
Var(S_1^{(m)}) & =E({(S_1^{(m)})}^{2})-E^{2}(S_1^{(m)})\\
 & =\sigma^{2}+\mu^{2}+2E(1_{\{X<S_1\}}S_1)\frac{2}{\beta^{2}}+P(X\leq S_1))\frac{2}{\beta^{2}}
 -{\left(\mu+\frac{1}{\beta}P(X<S_1)\right)}^{2}\\
 & =\sigma^{2}+2E(1_{\{X<S_1\}}S_1)\frac{2}{\beta^{2}}
 +P(X<S_1)\frac{2}{\beta^{2}}-\frac{2\mu}{\beta}P(X<S_1)-\frac{1}{\beta^{2}}{P(X<S_1)}^{2}\\
 & =\sigma^{2}+2E(1_{\{X<S_1\}}S_1)\frac{2}{\beta^{2}}
 +P(X<S_1)\left(\frac{2}{\beta^{2}}-\frac{2\mu}{\beta}-\frac{1}{\beta^{2}}P(X<S_1)\right).
\end{align*}
\end{proof}

\subsection{Details for the deterministic flow model}\label{sect:ComplementDeterministic}

We consider the deterministic 4-stage cycle and assume that $\mmu_{1}>\mmu_{3}$.
We will analyze some details of the long-time behavior
of the system.

An important property of the deterministic system
is that after all customers passed node $1$ once, the inter-arrival times at  node $3$ are greater
than service times by at least $(\mmu_{1}-\mmu_{3})$. Consequently , after a finite initial period, independent of the initial configuration
\begin{enumerate}
\item the waiting times at node 3 are always zero,
\item the arrival times at the node 1 are obtained from  departure times
from the node $1$ by adding a delay $\mmu_{2}+\mmu_{3}+\mmu_{3}$
(the backcycle  times),
\item the inter-arrival times at any node are greater or equal to $\mmu_{1}$.
\end{enumerate}
Henceforth we assume that the initial period has passed and
start the system when at time $0$ these properties are in force.

We count the customers  arriving
 to the station $1$.
We denote  the sequence of arrival times at station $1$ by
$(\gamma_1(n), n\geq 1)$, the sequence of departure times by $(\tau_1(n), n\geq 1)$.
The $n$-th customer's waiting time at $1$ is denoted $V_1(n)$,
and the inter-arrival time between the $n$-th and $n+1$-th customer by
$A_1(n)$. If the $n$-th arriving customer is served at all nodes and returns to the station
$1$ after passing the nodes $2, 3, 4 $ he generates the $n+K$-th arrival at $1$.
It holds
\begin{equation}
A_1(n)\geq\mmu_{1}\qquad\forall n\in\mathbb{N} \label{eq:det-A-n}
\end{equation}
\begin{equation}
\gamma_1(n+K)=\tau_1(n)+\mmu_{2}+\mmu_{3}+\mmu_{4}\qquad\forall n\in\mathbb{N}.  \label{eq:det-gamma-i+K}
\end{equation}
Furthermore, for any queuing system with a single server and deterministic
service times  $\mmu_{1}$ we have
\begin{equation}
\tau_1(n)=\gamma_1(n)+V_1(n)+\mmu_{1}\label{eq:det-tau-equation}
\end{equation}
and (the Lindley recursion)
\begin{equation}
V_1(n+1)=\max(0,V_1(n)+\mmu_{1}-A_1(n))\qquad\forall n\in\mathbb{N}.\label{eq:det-V-n+1}
\end{equation}

From~\eqref{eq:det-gamma-i+K} and~\eqref{eq:det-tau-equation} follows
for all $n\in\mathbb{N}$
\begin{align}
\gamma_1(n+K) & =\gamma_1(n)+\Big(V_1(n)+\mmu_{1}+\mmu_{2}+\mmu_{3}+\mmu_{4}\Big)\label{eq:det-gamma-i+K+from-gamma-i}\\
\Longleftrightarrow\sum_{i=n}^{\mathclap{n+K-1}}A_1(i)= & V_1(n)+\mmu_{1}+\mmu_{2}+\mmu_{3}+\mmu_{4}.\label{eq:det-sum-An-V-1}
\end{align}

From~\eqref{eq:det-A-n} and~\eqref{eq:det-V-n+1} follows
\begin{equation}
V_1(n+1)\leq V_1(n),\label{eq:det-decreasing-V-n}
\end{equation}
and
\begin{equation*}
V_1(n+1)=V_1(n)+\mmu_{1}-A_1(n)\qquad\text{if }V_1(n+1)>0. \label{eq:det-V-n-positive-recursion}
\end{equation*}

\eqref{eq:det-decreasing-V-n} implies that whenever the waiting
time at station $1$ becomes zero it will remain zero.
We can classify whether waiting times at station $1$ become zero and obtain a dichotomy.

\begin{prop}
\label{prop:det-not-saturated-and-saturation-point}For a cycle with
$K$ customers, such that
\begin{equation}
K\mmu_{1}\leq\mmu_{1}+\mmu_{2}+\mmu_{3}+\mmu_{4}\label{eq:no-waiting-time-conditions}
\end{equation}
holds, the waiting times at node 1 becomes zero after finite number of
services  there, and stays zero thereafter.\end{prop}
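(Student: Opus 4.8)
The plan is to exploit structure already in place: by~\eqref{eq:det-decreasing-V-n} the waiting-time sequence $(V_1(n))_{n\ge1}$ at node $1$ is non-increasing and is bounded below by zero, and the text has already observed that once it attains the value zero it remains zero thereafter. Consequently it suffices to rule out the single alternative that $V_1(n)>0$ for \emph{every} $n$. If that alternative is excluded, there is a finite index $n_0$ with $V_1(n_0)=0$, and monotonicity then forces $V_1(n)=0$ for all $n\ge n_0$, which is exactly the assertion.

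So I would argue by contradiction and \textbf{assume} $V_1(n)>0$ for all $n$. Under this assumption the Lindley recursion~\eqref{eq:det-V-n+1} never saturates at its lower bound: since $V_1(m+1)>0$, the maximum in~\eqref{eq:det-V-n+1} is realized by its second argument, so the linear relation $V_1(m+1)=V_1(m)+\mmu_1-A_1(m)$ holds with equality at every step $m$. Summing this identity over a block of $K$ consecutive steps telescopes to
\begin{equation*}
V_1(n+K)=V_1(n)+K\mmu_1-\sum_{i=n}^{n+K-1}A_1(i).
\end{equation*}

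Next I would substitute the cycle identity~\eqref{eq:det-sum-An-V-1}, namely $\sum_{i=n}^{n+K-1}A_1(i)=V_1(n)+\mmu_1+\mmu_2+\mmu_3+\mmu_4$, which holds unconditionally since it is derived purely from the backcycle structure. The $V_1(n)$ terms cancel, leaving the constant value
\begin{equation*}
V_1(n+K)=K\mmu_1-(\mmu_1+\mmu_2+\mmu_3+\mmu_4).
\end{equation*}
By the hypothesis~\eqref{eq:no-waiting-time-conditions} the right-hand side is $\le 0$, so $V_1(n+K)\le 0$, contradicting the standing assumption $V_1(n+K)>0$. This contradiction rules out the alternative and finishes the argument.

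The only delicate point — more a bookkeeping subtlety than a genuine obstacle — is the interplay between the two summation identities being combined: the telescoped relation is valid only while the Lindley recursion stays in its \emph{linear} (unsaturated) branch across the whole block, and this is precisely what the contradiction hypothesis $V_1>0$ guarantees, whereas~\eqref{eq:det-sum-An-V-1} requires no such restriction. I would also highlight that the conclusion is the stronger finite-time statement (zero is genuinely attained, not merely approached in the limit): because the derived value $K\mmu_1-(\mmu_1+\mmu_2+\mmu_3+\mmu_4)$ is a fixed nonpositive constant, positivity cannot persist over $K$ consecutive indices, so a zero occurs after a bounded number of services, and monotonicity~\eqref{eq:det-decreasing-V-n} pins it to zero from then on.
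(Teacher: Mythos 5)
Your argument is correct and is essentially the paper's own proof: both proceed by contradiction, use the linear branch of the Lindley recursion~\eqref{eq:det-V-n+1} to telescope over $K$ consecutive services, substitute the cycle identity~\eqref{eq:det-sum-An-V-1} so that the $V_1(n)$ terms cancel, and conclude $V_1(n+K)=K\mmu_1-\sum_{i=1}^4\mmu_i\leq 0$, contradicting positivity. The only cosmetic difference is that the paper telescopes over the fixed block $n=1,\dots,K$ while you work over an arbitrary block of $K$ indices.
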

\begin{proof}
Recall that~\eqref{eq:det-A-n},~\eqref{eq:det-gamma-i+K},~\eqref{eq:det-tau-equation}, and~\eqref{eq:det-V-n+1} hold. We show
that after maximal $K$ further services at node $1$ the waiting
time there will be $0$.

If one of the waiting times $V_1(n)$, $n\in\{1,\ldots,K+1\}$ is $0$
then by the property~\eqref{eq:det-decreasing-V-n} we are done. We
show by contradiction, that the case  $V_1(n)>0$ for all $n\in\{1,\ldots,K+1\}$
is not possible.
By~\eqref{eq:det-V-n-positive-recursion} we have
\begin{align*}
\sum_{n=1}^{K}V_1(n+1)= &
\sum_{n=1}^{K}V_1(n)+K\mmu_{1}-
\sum_{n=1}^{K}A_1(n) \\
\overset{\eqref{eq:det-sum-An-V-1}}{\Longleftrightarrow}
\sum_{n=1}^{K}V_1(n+1)= &
\sum_{n=2}^{K}V_1(n)+K\mmu_{1}
-(\mmu_{1}+\mmu_{2}+\mmu_{3}+\mmu_{4}) \\
 V_1(K+1) = & K\mmu_{1}-(\mmu_{1}+\mmu_{2}+\mmu_{3}+\mmu_{4}).
\end{align*}

By assumption~\eqref{eq:no-waiting-time-conditions} we have
\[
V_1(K+1)\leq 0
\]
 which contradicts the assumption $V_1(n)>0$ for all $n\in\{1,\ldots,K+1\}$.\end{proof}
\begin{prop}
\label{prop:det-supersaturated}For a cycle with $K$ customers, such that
\begin{equation}
K\mmu_{1}>\mmu_{1}+\mmu_{2}+\mmu_{3}+\mmu_{4}\label{eq:no-waiting-time-conditions-1}
\end{equation}
holds,  after finite number of services at node $1$
some customers waiting time at  node $1$ is positive. Thereafter, all subsequent customers' waiting times at  node $1$ are positive as well.\end{prop}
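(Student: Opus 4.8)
The plan is to establish the complementary half of the dichotomy behind \prettyref{prop:det-not-saturated-and-saturation-point}, now exploiting the reversed inequality furnished by the supersaturation hypothesis~\eqref{eq:no-waiting-time-conditions-1}. The workhorse is the identity~\eqref{eq:det-sum-An-V-1},
\[
\sum_{i=n}^{n+K-1} A_1(i) = V_1(n) + \mmu_1 + \mmu_2 + \mmu_3 + \mmu_4,
\]
which I would stress holds \emph{unconditionally} for every $n$ in the regime in force after the initial period: it was derived from the purely kinematic relations~\eqref{eq:det-gamma-i+K} and~\eqref{eq:det-tau-equation} (equivalently~\eqref{eq:det-gamma-i+K+from-gamma-i}), with no assumption on the sign of the waiting times. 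This is the feature I would lean on, because it bounds $V_1(n)$ from below \emph{uniformly} in $n$, rather than merely tracking the instant at which a single waiting time might vanish.

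First I would solve the identity for $V_1(n)$ and insert the elementary bound $A_1(i) \ge \mmu_1$ from~\eqref{eq:det-A-n}, valid for all $i$ once the transient has elapsed. Summing $K$ copies gives
\[
V_1(n) = \sum_{i=n}^{n+K-1} A_1(i) - (\mmu_1 + \mmu_2 + \mmu_3 + \mmu_4) \ge K\mmu_1 - (\mmu_1 + \mmu_2 + \mmu_3 + \mmu_4),
\]
and the right-hand side is strictly positive precisely by~\eqref{eq:no-waiting-time-conditions-1}. Hence $V_1(n) > 0$ for every index $n$ in the regime where~\eqref{eq:det-A-n} and~\eqref{eq:det-sum-An-V-1} apply. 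Since the initial period is finite, this yields both assertions at once: some (indeed every) waiting time past the transient is positive, and all subsequent ones remain positive.

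I would then add a short remark on why positivity cannot decay back to zero, which is exactly where the monotonicity~\eqref{eq:det-decreasing-V-n} \emph{alone} is insufficient: the waiting times are only non-increasing, so a priori they could drift down to $0$, and it is the uniform lower bound above that forbids this. As a cross-check I would record the contradiction form directly: if $V_1(n_0)=0$ for some $n_0$, then~\eqref{eq:det-sum-An-V-1} forces $\sum_{i=n_0}^{n_0+K-1}A_1(i) = \mmu_1 + \mmu_2 + \mmu_3 + \mmu_4 < K\mmu_1$, contradicting~\eqref{eq:det-A-n}. The only delicate point, and thus the thing to pin down carefully, is the bookkeeping around the transient: one must confirm that~\eqref{eq:det-A-n} and~\eqref{eq:det-sum-An-V-1} genuinely hold for \emph{all} indices $n$ in the post-transient numbering used here, so that the uniform estimate applies from the first counted service onward. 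The analytic core is otherwise a one-line inequality.
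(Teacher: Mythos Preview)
Your argument is correct and is essentially the paper's own proof: combine the identity~\eqref{eq:det-sum-An-V-1} with the termwise lower bound~\eqref{eq:det-A-n} to obtain $V_1(n)\ge K\mmu_1-\sum_{i=1}^4\mmu_i>0$ for every post-transient $n$. Your additional commentary on monotonicity and the contradiction variant is sound but not needed, since the uniform lower bound already settles both clauses of the statement at once.
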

\begin{proof}
Recall that~\eqref{eq:det-A-n},~\eqref{eq:det-gamma-i+K},~\eqref{eq:det-tau-equation},
and~\eqref{eq:det-V-n+1} hold. From~\eqref{eq:det-A-n} and~\eqref{eq:det-sum-An-V-1}  for
all $n\in\mathbb{N}$  follows
\[
V_1(n)+\mmu_{1}+\mmu_{2}+\mmu_{3}+\mmu_{4}\geq K\mmu_{1}.
\]
By assumption~\eqref{eq:no-waiting-time-conditions-1} we have
\[
V_1(n)+K\mmu_{1}>K\mmu_{1},\qquad\forall n\in\mathbb{N}.
\]
\end{proof}

\begin{prop}
For a cycle with $K$ customers  the asymptotic average waiting
times $\bar{V}_{j}$ at nodes $j\in\{1,2,3,4\}$ are
\begin{align}
\bar{V}_{1} & =\max\left(0,K\mmu_{1}-\sum_{i=1}^{4}\mmu_{i}\right)\label{eq:det-V-bar-1}.\\
\bar{V}_{j} & =0,\qquad j\in\{2,3,4\}.\label{eq:det-V-bar-j-not-1}
\end{align}

The asymptotic idle probability $\pi_{1}^{(K)}(0)$ for node $1$ is
\begin{equation}
\pi_{1}^{(K)}(0)=1-\frac{K\mmu_{j}}{\sum_{i=1}^{4}\mmu_{i}+\bar{V}_{1}}\label{eq:pi-1-0-deterministic-1}.
\end{equation}
\end{prop}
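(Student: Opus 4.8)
The plan is to reduce everything to the pathwise analysis already assembled in this section and then convert the resulting deterministic limits into the stated averages via \prettyref{prop:pi0}. First I would dispose of the backcycle nodes: nodes $2$ and $4$ are infinite servers, so by definition no customer ever waits there, and the opening observation of this section (together with $\mmu_1>\mmu_3$) guarantees that after the initial period the interarrival times at node $3$ strictly exceed its service time, so node $3$ carries no queue either. Hence $\bar V_j=0$ for $j\in\{2,3,4\}$, which is \eqref{eq:det-V-bar-j-not-1}.

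For \eqref{eq:det-V-bar-1} I would split according to the dichotomy furnished by \prettyref{prop:det-not-saturated-and-saturation-point} and \prettyref{prop:det-supersaturated}. In the non-saturated case $K\mmu_1\le\sum_{i=1}^4\mmu_i$ the first proposition shows $V_1(n)=0$ for all large $n$, so the asymptotic average vanishes and equals $\max(0,K\mmu_1-\sum_{i=1}^4\mmu_i)$ because the argument of the maximum is nonpositive. In the saturated case $K\mmu_1>\sum_{i=1}^4\mmu_i$ the second proposition shows $V_1(n)>0$ for all large $n$; there the Lindley recursion \eqref{eq:det-V-n+1} loses its maximum and reads $V_1(n+1)=V_1(n)+\mmu_1-A_1(n)$. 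Summing this over one full block $i=n,\dots,n+K-1$ and inserting \eqref{eq:det-sum-An-V-1} collapses the telescoping sum to the identity $V_1(n+K)=K\mmu_1-\sum_{i=1}^4\mmu_i$, a constant independent of $n$. Thus $V_1$ is not merely convergent but eventually exactly equal to this value, and $\bar V_1=K\mmu_1-\sum_{i=1}^4\mmu_i=\max(0,K\mmu_1-\sum_{i=1}^4\mmu_i)$.

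It remains to obtain \eqref{eq:pi-1-0-deterministic-1}. Relation \eqref{eq:det-gamma-i+K+from-gamma-i} says that a tagged customer's full cycle length is $\gamma_1(n+K)-\gamma_1(n)=V_1(n)+\sum_{i=1}^4\mmu_i$, which tends to $\bar V_1+\sum_{i=1}^4\mmu_i$. Since exactly $K$ arrivals at node $1$ occur during one such cycle, the throughput of the shovel is the reciprocal of the mean interarrival time, $\lambda_1=K/(\bar V_1+\sum_{i=1}^4\mmu_i)$; equivalently this is \eqref{eq:det-sum-An-V-1} read as the statement that $K$ consecutive interarrival times sum to $\bar V_1+\sum_{i=1}^4\mmu_i$. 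Feeding this $\lambda_1$ into \prettyref{prop:pi0}\,(b), namely $\pi_1^{(K)}(0)=1-\lambda_1\mmu_1$, yields the claimed expression. As a sanity check the formula is internally consistent with the dichotomy: in the saturated case $\bar V_1+\sum_{i=1}^4\mmu_i=K\mmu_1$ forces $\pi_1^{(K)}(0)=0$ (the shovel never idles), while in the non-saturated case it reproduces the Boyse--Warn value $1-K\mmu_1/\sum_{i=1}^4\mmu_i$ of \eqref{eq:BW2-1}.

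The step I expect to be the genuine obstacle is this last conversion: the propositions are pathwise statements about a single deterministic trajectory, whereas $\pi_1^{(K)}(0)$ and $\lambda_1$ are defined as stationary or limiting quantities. The bridge is that the deterministic system becomes exactly periodic (indeed constant in $V_1$) after finitely many steps, so its long-run time averages are unambiguous and may be identified with the limiting idle probability and throughput; making this identification precise, and checking that \prettyref{prop:pi0} applies verbatim to the deterministic flow limit rather than only to the Markovian stochastic model, is the point that needs the most care.
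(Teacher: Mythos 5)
Your proof is correct and follows essentially the same route as the paper's for the waiting times: the same appeal to the infinite-server property and the absence of queueing at node $3$ for \eqref{eq:det-V-bar-j-not-1}, and the same dichotomy via \prettyref{prop:det-not-saturated-and-saturation-point} and \prettyref{prop:det-supersaturated} combined with \eqref{eq:det-sum-An-V-1} for \eqref{eq:det-V-bar-1}. Your telescoping of the unsaturated Lindley recursion over a block of $K$ arrivals is a marginally cleaner way to reach $V_1(n+K)=K\mmu_1-\sum_{i=1}^4\mmu_i$ than the paper's argument, which first observes that all interarrival times equal $\mmu_1$ and then substitutes into \eqref{eq:det-sum-An-V-1}; the two computations are equivalent. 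The one place you genuinely add something is \eqref{eq:pi-1-0-deterministic-1}: the paper simply cites Boyse and Warn for this formula, whereas you derive it by reading the cycle length $\bar V_1+\sum_{i=1}^4\mmu_i$ off \eqref{eq:det-gamma-i+K+from-gamma-i}, obtaining $\lambda_1=K/(\bar V_1+\sum_{i=1}^4\mmu_i)$, and invoking $\pi_1^{(K)}(0)=1-\lambda_1\mmu_1$ from \prettyref{prop:pi0}. Your closing caveat --- that \prettyref{prop:pi0} is stated for the stationary stochastic model and must be justified for a deterministic, eventually periodic trajectory via pathwise Little's-law arguments --- is a fair point that the paper does not address either; since the paper's own proof of \prettyref{prop:pi0} already rests on Stidham's pathwise version of Little's theorem, the identification of long-run time averages with the limiting quantities goes through, and your sanity checks (saturated case gives $\pi_1^{(K)}(0)=0$, unsaturated case recovers \eqref{eq:BW2-1}) confirm consistency.
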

\begin{proof}
From the very definition of infinite servers $\bar{V}_{2} = \bar{V}_{4}=0$,
and we already showed that after a finite initial period waiting times
at node $3$ are zero, thus~\eqref{eq:det-V-bar-j-not-1}
holds.

\begin{ignorechecktex}
\begin{enumerate}[label=(\roman*)]
\item\label{enu:Kmu-less}
In case of $K\mmu_{1}\leq\mmu_{1}+\mmu_{2}+\mmu_{1}+\mmu_{4}$ we proved
in \prettyref{prop:det-not-saturated-and-saturation-point} that $V_{1}(n)=0$
for all $n>K$, so $\bar{V}_{1}=0$.

\item\label{enu:Kmu-greater-than-or-equal-to}
In case of $K\mmu_{1}>\mmu_{1}+\mmu_{2}+\mmu_{1}+\mmu_{4}$
we proved in \prettyref{prop:det-supersaturated} that after a finite initial period
$V_{1}(n)>0$ holds for all $n$.
\end{enumerate}
\end{ignorechecktex}

So node $1$ stays busy after this initial period  and the inter-departure times  from node $1$ are
$\mmu_{1}$. Because the backcycle time is a constant delay  $\mmu_{2}+\mmu_{3}+\mmu_{4}$, the inter-arrival times at node $1$ are $A_1(n)=\mmu_{1}$ as well and when the $n$-th customer returns to the
node 1 the inter-arrival time $A_{n+K}$ is $\mmu_{1}$. Substituting
these results into~\eqref{eq:det-sum-An-V-1} we obtain for all customers $n$ arriving at $1$
after the finite initial period
\begin{align*}
\sum_{i=n}^{n+K-1}A_1(i) =
K\mmu_{1} & =V_{1}(n+1+K)+\mmu_{1}+\mmu_{2}+\mmu_{3}+\mmu_{4}\\
\Longrightarrow\bar{V}_{1} & =K\mmu_{1}-(\mmu_{1}+\mmu_{2}+\mmu_{3}+\mmu_{4}).
\end{align*}
Summarizing the results of~\ref{enu:Kmu-less} and~\ref{enu:Kmu-greater-than-or-equal-to}
yields~\eqref{eq:det-V-bar-1}.

The formula~\eqref{eq:pi-1-0-deterministic-1} is obtained by Boyse and Warn~\cite[(2.1)]{boyse;warn:75}.
\end{proof}
\bibliographystyle{alpha}
\bibliography{}

\end{document}